\def\eqref#1{equation~\ref{#1}}
\def\1{\bm{1}}
\DeclareMathAlphabet{\mathsfit}{\encodingdefault}{\sfdefault}{m}{sl}
\SetMathAlphabet{\mathsfit}{bold}{\encodingdefault}{\sfdefault}{bx}{n}
\DeclareMathOperator*{\argmin}{arg\,min}
\DeclarePairedDelimiter\abs{\lvert}{\rvert}%
\newcommand{\norm}[1]{\left\lVert#1\right\rVert}
\DeclarePairedDelimiterX{\inp}[2]{\langle}{\rangle}{#1, #2}
\newcommand\numberthis{\addtocounter{equation}{1}\tag{\theequation}}
\newcommand\restr[2]{{
  \left.\kern-\nulldelimiterspace 
  #1 
  \vphantom{\big|} 
  \right|_{#2} 
  }}
\newtheorem{theorem}{Theorem}
\newtheorem{lemma}[theorem]{Lemma}
\newtheorem{corollary}[theorem]{Corollary}
\newtheorem{definition}{Definition}
\theoremstyle{remark}
\title{Differentially Private Fréchet Mean on the Manifold of Symmetric Positive Definite (SPD) Matrices with log-Euclidean Metric}
\author{\name Saiteja Utpala    \email saitejautpala@gmail.com
      \AND
      \name Praneeth Vepakomma \email vepakom@mit.edu \\
      \addr MIT
      \AND
      \name Nina Miolane \email ninamiolane@ucsb.edu \\
      \addr UC Santa Barbara}
\DeclareMathOperator{\spd}{\textup{SPD}}
\DeclareMathOperator{\sym}{\textup{SYM}}
\DeclareMathOperator{\Logm}{\textup{Logm}}
\DeclareMathOperator{\Expm}{\textup{Expm}}
\DeclareMathOperator{\Vecd}{vecd}
\DeclareMathOperator{\Invvecd}{invvecd}
\DeclareMathOperator{\Diag}{diag}
\DeclareMathOperator{\Upperdiag}{upperdiag}
\begin{document}

\maketitle

\begin{abstract}

Differential privacy has become crucial in the real-world deployment of statistical and machine learning algorithms with rigorous privacy guarantees. The earliest statistical queries, for which differential privacy mechanisms have been developed, were for the release of the sample mean. In Geometric Statistics, the sample Fréchet mean represents one of the most fundamental statistical summaries, as it generalizes the sample mean for data belonging to nonlinear manifolds. In that spirit, the only geometric statistical query for which a differential privacy mechanism has been developed, so far, is for the release of the sample Fréchet mean: the \emph{Riemannian Laplace mechanism} was recently proposed to privatize the Fréchet mean on complete Riemannian manifolds. In many fields, the manifold of Symmetric Positive Definite (SPD) matrices is used to model data spaces, including in medical imaging where privacy requirements are key. We propose a novel, simple and fast mechanism - the \emph{tangent Gaussian mechanism} - to compute a differentially private Fréchet mean on the SPD manifold endowed with the log-Euclidean Riemannian metric.  We show that our new mechanism has significantly better utility and is computationally efficient --- as confirmed by extensive experiments.  
\end{abstract}

\section{Introduction}
Privacy-preserving computing is an active area of research which is necessitated by ethics, regulations, requirements for protections of trade secrets, or possible lack of trust amongst distributed data siloes. Privacy preservation is desired across several topologies of data sharing, be it from client devices to powerful centralized entities or a in peer-to-peer fashion. Mistrust in data sharing carries over not only in the sharing of raw data but also in the sharing of results obtained from intermediate or complete computations. 
The need for stringent privacy protections is often fueled by many privacy leakages and attacks that continue to happen under various settings operating without the right level of privacy-protecting mechanisms. \par In this context, differential privacy (DP) \citep{dwork2006calibrating,dwork2008differential, dwork2014algorithmic, 10.1007/11787006_1} has emerged as one of the leading mathematical definitions to ensure the preservation of privacy up to a chosen level. Privacy-preserving \emph{mechanisms} that satisfy the definition of differential privacy were subsequently developed to privatize a wide range of statistical and machine learning computations. 
The earliest queries, for which mechanisms have been proposed, were for the privatization of sample means in statistics, computed for data lying on linear spaces. When data belong to nonlinear manifolds, the Fréchet mean query \citep{frechet1948elements} is the foundational building block of geometric statistics that needs to be privatized. Our work proposes a new, simpler and faster, mechanism for private Fréchet means on the manifold of symmetric positive definite (SPD) matrices endowed with log-Euclidean metric.

\subsection{Motivation}

    \textbf{Fréchet mean: a building block in geometric statistics} While traditional statistics studies data that lies on \emph{linear spaces}, geometric statistics studies data that lies on \emph{nonlinear spaces} such as Riemannian manifolds, affine connection spaces, or stratified spaces \citep{pennec2019riemannian, miolane2016geometric}. Such analysis is fruitful as data might have inherent constraints that are well captured by the geometry of a nonlinear space \cite{miolane2021iclr,myers2022iclr}. For instance, symmetric matrices constrained to have strictly positive eigenvalues are conveniently modeled as elements of the manifold of symmetric positive definite (SPD) matrices. Several extensions of traditional statistical analysis tools have thus been developed for the manifold setting: regression has been generalized to geodesic regression \citep{fletcher2011geodesic, thomas2013geodesic}, principal component analysis (PCA) to principal geodesic analysis or geodesic PCA \citep{fletcher2004principal,sommer2010manifold,huckemann2010intrinsic}, and mean shift to Riemannian mean shift clustering \citep{subbarao2009nonlinear, caseiro2012semi}. In each of these algorithms, the computation of the \textit{sample Fréchet mean} generalizes the computation of the \textit{sample mean}, and thus represents the most fundamental building block. The privatization of the Fréchet mean is therefore the key element required to privatize geometric statistical queries. Privacy-preserving geometric statistics is also crucial, as one of its main application areas is medical imaging and computational anatomy \citep{pennec2019riemannian, miolane2016geometric} for which privacy requirements are often desirable.

    \textbf{Importance of the SPD manifold with log-Euclidean metric} Symmetric positive definite (SPD) matrices model a wide range of data, from medical images with Diffusion Tensor Imaging (DTI) \citep{basser:hal-00349721, pennec2006riemannian}, to physiological signals with electroencephalography (EEG) signals from brain-computer interfaces (BCI)\citep{yger2016riemannian, zanini2017transfer, chevallier2021review}, to 3D shapes \citep{tabia2014covariance} to name a few.
    Given their central roles for medical data where privacy is of the utmost importance \citep{lotan2020medical, li2005protecting}, private statistical computations on the SPD manifold are a worthy endeavour. The SPD manifold can be equipped with different \textit{Riemannian metrics} that provide elementary operations such as distance computations. The log-Euclidean metric, originally proposed in \citep{arsigny2006log}, has numerous advantages over another popular Riemannian metric called the affine invariant metric \citep{pennec2006riemannian}: $(a)$ it is computationally faster, $(b)$ it gives similar or better performances on several processing and learning tasks, 
    $(c)$ and quite importantly, it provides a \emph{closed form} expression for the Fréchet mean - which otherwise requires solving an optimization problem.
    
    \textbf{Need for better and faster privacy mechanisms} Despite its importance for the processing of a number of (medical) data, geometric statistics currently stands understudied from the lens of differential privacy. The very recent work by  \citep{reimherr2021differential} provides the first differentially private mechanism for the Fréchet mean. However, its utility - a measure of the mechanism's deviation from non-privatized computations - makes it impracticable on the manifold of SPD matrices as soon as we consider matrices of moderate size, e.g. $20 \times 20$ matrices.  Consequently, there is a need for better and faster privacy mechanisms on manifolds, starting with the SPD manifold.

\subsection{Related Work and Contributions}

\citet{reimherr2021differential} were first to consider differential privacy in manifold setting and developed \emph{Riemannian Laplace mechanism} by extending the standard Laplace mechanism \citep{dwork2014algorithmic} for linear spaces to complete Riemannian manifolds. It is based on a Laplace distribution that was originally proposed for SPD matrices \citep{hajri2016riemannian} based on distance of the affine invariant metric \citep{pennec2006riemannian}, which they generalize to any manifold $\mathcal{M}$ equipped with a distance $\rho$:
\begin{align} \label{lapMech}
    p(x)  \propto \exp\left(- \frac{\rho(x,m)}{\sigma}\right), \quad \forall x \in \mathcal{M}
\end{align}
where $m \in \mathcal{M}$, $\sigma \in \mathbb{R}_{>0}$(positive reals) are parameters of the probability density $p$. \citet{reimherr2021differential} show that the mechanism obtained achieves \emph{pure} differential privacy and provides an upper bound for the expectation of its utility (a measure of the deviation from non-privatized computations) for the Fréchet mean query. Their method is applicable to various Riemannian manifolds that satisfy some regularity conditions.

Approximate differential privacy relaxes pure differential privacy (see Section \ref{sec:prelims}) but provides significantly better utility for higher dimensions and is heavily used in real world applications \cite{abadi2016deep}. In the Euclidean case, the Gaussian mechanism, where noise is added from standard Gaussian, satisfies approximate differential privacy. To this end, we make use of log Gaussian distribution \cite{schwartzman2016lognormal}, an intrinsic distribution on SPD matrices, for deriving approximate differentially private mechanism. This relaxation helps us obtain better utility compared to Riemannian Laplace mechanism in terms of dimension, similar to standard Euclidean case.  We summarize our contributions are as follows.

\begin{table}[h!]
\centering
 \resizebox{1.\textwidth}{!}{\begin{tabular}{c c  c c}
 \hline
 Mechanism $\mathcal{\mathbf{A}}$ & DP  & $\mathbb{E}[\rho^2(f(\mathcal{D}),  \mathcal{\textbf{A}}(\mathcal{D}))]$  & Theoretical Results  \\ [0.5ex] 
 \hline
Riemannian Laplace \citep{reimherr2021differential} & Pure DP    & $\mathcal{O}(k^4)$ &  Expectation of $\rho^2(f(\mathcal{D}),  \mathcal{\textbf{A}}(\mathcal{D}))$ \\
 tangent Gaussian (Ours) & Approx. DP   & $\mathcal{O}(\ln(1/\delta)k^2)$& Exact Distribution of $\rho^2(f(\mathcal{D}),  \mathcal{\textbf{A}}(\mathcal{D}))$   \\ [0.1ex] 
 \hline
 \end{tabular}}
 \vspace{0.3cm}
 \caption{Differences between existing \citep{reimherr2021differential} and proposed mechanisms for private Fréchet mean queries on the manifold of $k \times k$ SPD matrices endowed with the log-Euclidean metric. The notation $\rho^2(f(\mathcal{D}), \mathcal{\textbf{A}}(\mathcal{D}))$ represents the utility with $\mathcal{D}$ the dataset, $\mathcal{\mathbf{A}}$ the mechanism under consideration, $\rho$ the log-Euclidean distance, $f$ the Fréchet mean and $\delta$ quantifies approximate differential privacy.}
  \label{summaryTab}
  \vspace{-2mm}
\end{table}

\begin{enumerate}[leftmargin=*]
    \item We propose a new and simple mechanism - called the \emph{tangent Gaussian Mechanism} - that privatizes any statistical summary on the manifold of Symmetric Positive Definite (SPD) matrices endowed with the log-Euclidean metric. We prove that it achieves approximate differential privacy (Th.~\ref{th:TGPrivacy}). 
    \item When the statistical summary is the Fréchet mean, we show that our mechanism obtains significant improvement in terms of utility over recent works - which we demonstrate theoretically, and practically for data in higher dimensions. Further, our mechanism is computationally efficient and easily implementable.
    \item We present the effectiveness of our mechanism on synthetic and real-world (medical) imaging data, the latter being represented via their covariance descriptors. To this aim, we also prove a theoretical bound on the radius of log-Euclidean geodesic ball with the covariance descriptor pipeline \citep{tuzel2006region} - required for the applicability of our mechanism (Th.~\ref{testing}). 
\end{enumerate}

Table~\ref{summaryTab} highlights the technical differences between \citep{reimherr2021differential} and our work.

\section{Preliminaries and Notations} \label{sec:prelims}

\textbf{Elements of Riemannian Geometry} Let $\mathcal{M}$ be a $d$-dimensional smooth connected manifold and $T_{p}\mathcal{M}$ be its tangent space at point $p \in \mathcal{M}$. A \textit{Riemannian metric} $g$ on $M$ is a collection of inner products $g_{p} : T_{p}\mathcal{M} \times T_{p}\mathcal{M} \rightarrow \mathbb{R}$ that vary smoothly with $p$. A manifold $\mathcal{M}$ equipped with a Riemannian metric $g$ is called a Riemannian manifold. Importantly, the metric $g$ gives a distance $\rho$ on $\mathcal{M}$. Let $\gamma : [0,1] \rightarrow \mathcal{M}$ be a smooth parametrized curve on $\mathcal{M}$ with velocity vector at $t$ denoted as $\dot{\gamma}_{t} \in T_{\gamma(t)}\mathcal{M}$. The length of $\gamma$ is defined as $L_{\gamma} = \int_{0}^1 \sqrt{g_{\gamma(t)} (\dot{\gamma}_{t}, \dot{\gamma}_{t})} dt$ and the distance $\rho$ between any two points $p, q \in \mathcal{M}$ is: $\rho(p,q) = \inf_{\gamma : \gamma(0)=p, \gamma(1)=q} L_{\gamma}.$
    
If in addition $\mathcal{M}$ is complete for $\rho$, then any two points $p, q \in \mathcal{M}$ can be joined by length-minimizing curve, called a geodesic.
We refer the reader to \citep{do1992riemannian, lee2006riemannian, helgason1979differential} for a detailed exposition.

\textbf{Elements of Differential Privacy (DP)} Let $\mathcal{X}$ be an input data space and $\mathcal{M}$ the manifold under consideration. Let $f : \mathcal{X}^n \rightarrow \mathcal{M}$ be a manifold-valued statistical summary that requires privatization with respect to some sensitive dataset $\mathcal{D}$ of size $n$, \textit{i.e.} $\mathcal{D} \in \mathcal{X}^n$. Two datasets $\mathcal{D,D'} \in \mathcal{X}^n$ are said to be adjacent if they differ by at most one data point. We denote adjacency as $\mathcal{D} \sim \mathcal{D}'$. The \emph{sensitivity} of the summary $f$ with respect to the distance $\rho$ on $\mathcal{M}$ is defined as:
\begin{equation}
     \Delta_{\rho} = \sup_{\mathcal{D} \sim \mathcal{D}'} \rho(f(\mathcal{D}), f(\mathcal{D}')),
\end{equation}
which is the maximum amount of deviation that can occur in the output of $f$ for adjacent datasets. 

A \textit{mechanism} $\mathcal{\mathbf{A}} : \mathcal{X}^n \rightarrow \mathcal{M}$ is a randomized algorithm that takes a dataset $\mathcal{D}$ as input, and outputs a privatized version of the summary $f$ on $\mathcal{D}$. The mechanism $\mathcal{\mathbf{A}}$ satisfies $(\epsilon,0)$ differential privacy (also \textit{pure differential privacy}) if, for all adjacent datasets $\mathcal{D} \sim \mathcal{D}'$ and for all measurable sets $S$ of $\mathcal{M}$ the following holds:
\begin{align}
    \label{def:pure-dp}
    \mathbb{P}[\mathcal{\mathbf{A}}(\mathcal{D}) \in  S ] \leq \exp{(\epsilon)} \, \mathbb{P}[\mathcal{\mathbf{A}}(\mathcal{D'}) \in  S ] \quad
\end{align}
 The intuition is that the change of a single element of the data space $\mathcal{X}$ does not significantly alter the output distribution of the mechanism.
 As a relaxation, the mechanism $\mathcal{\mathbf{A}}$ satisfies ($\epsilon$, $\delta$)-differential privacy (also \emph{approximate differential privacy}) if, for all adjacent datasets  $\mathcal{D} \sim \mathcal{D}'$ and for all measurable sets $S$ of $\mathcal{M}$:
\begin{align*}
  \mathbb{P}[\mathcal{\mathbf{A}}(\mathcal{D}) \in  S ] \leq \exp{(\epsilon)} \, \mathbb{P}[\mathcal{\mathbf{A}}(\mathcal{D'}) \in  S ] + \delta .
 \end{align*}
Intuitively, $\delta$ can be thought of as the probability of privacy failure, when Eq.~\eqref{def:pure-dp} is not guaranteed. 

Let $p_{\mathcal{\mathbf{A}}(\mathcal{D})}$ be the density of the random variable $Y = \mathbf{A}(\mathcal{D})$. Given adjacent datasets $\mathcal{D} \sim \mathcal{D'}$, the \textit{privacy loss function} of $\mathcal{\mathbf{A}}$ is defined as
\begin{equation}
\ell_{\mathcal{\mathbf{A}}, \mathcal{D}, \mathcal{D'}}(y) = \ln \left( \frac{p_{\mathcal{\mathbf{A}}(\mathcal{D})}(y)}{p_{\mathcal{\mathbf{A}}(\mathcal{D'})}(y)} \right) \quad \forall y \in \mathcal{M},
\end{equation}
and the \textit{privacy loss random variable} is $L_{\mathcal{\mathbf{A}}, \mathcal{D}, \mathcal{D'}} = \ell_{\mathcal{\mathbf{A}}, \mathcal{D}, \mathcal{D'}} (Y)$ \citep{balle2018improving}. Importantly for our derivations, both sufficient and sufficient $\&$ necessary conditions for the mechanism $\mathcal{\mathbf{A}}$ to be $(\epsilon, \delta)$-differentially private (DP) can be formulated in terms of  $L_{\mathcal{\mathbf{A}}, \mathcal{D}, \mathcal{D'}}$. The sufficient condition writes : $\forall \mathcal{D} \sim \mathcal{D'} : \mathbb{P}[L_{\mathcal{\mathbf{A}}, \mathcal{D}, \mathcal{D'}} \geq \epsilon] \leq \delta  \implies   \mathcal{\mathbf{A}} \text{ is} (\epsilon, \delta)\text{-DP}.$ The sufficient $\&$ necessary condition is: $\forall \mathcal{D} \sim \mathcal{D'} : \mathbb{P}[L_{\mathcal{\mathbf{A}}, \mathcal{D}, \mathcal{D'}} \geq \epsilon] - \exp{(\epsilon)} \mathbb{P}[L_{\mathcal{\mathbf{A}}, \mathcal{D}, \mathcal{D'}} \leq -\epsilon] \leq \delta  \Longleftrightarrow \mathcal{\mathbf{A}} \text{ is} (\epsilon, \delta)\text{-DP}$.

\textbf{Fréchet Mean} When the data space $\mathcal{X}$ is equal to the manifold $\mathcal{M}$, we will be interested in mechanisms that can privatize a specific statistical summary $f$ called the Fréchet mean. The sample Fréchet mean $\overline{X}$ \citep{frechet1948elements} of the dataset $ \mathcal{D} = \left\{ X_1,\ldots X_n\right\}$ on the manifold $\mathcal{M}$ is defined as 
\begin{align*}
    \overline{X} \triangleq \left\{ p | p \in \argmin_{q \in \mathcal{M}} \sum_{i=1}^{n}\rho^2(q,X_i)  \right\},
\end{align*}
\textit{i.e.} we have in this case $\overline{X} = f(\mathcal{D})$ for $\mathcal{D} \in \mathcal{M}^n$. Intuitively, the Fréchet mean uses a property of the mean on linear spaces - namely the fact that mean minimizes the sum of squared distances to the data points - as a definition of mean on manifolds. Crucially, the Fréchet mean depends on the distance $\rho$ and therefore on the Riemannian metric defined on $\mathcal{M}$. We also note that the Fréchet mean might not always exist, and if it exists it might not be unique -- see supplementary materials. In practice, computing $\overline{X}$ generally requires optimization algorithms such as gradient descent on manifolds \citep{boumal2020introduction}.

\section{Geometry of the SPD Manifold with Log Euclidean Metric}

\textbf{Manifold and vector space structures} We now restrict $\mathcal{M}$ to be the manifold of symmetric positive definite (SPD) matrices:
\begin{equation}
    \text{SPD}(k) = \left\{ X \in \mathbb{R}^{k \times k} | X^T = X \text{ and } \forall u \in \mathbb{R}^k \setminus \left\{0\right\}, u^TXu > 0  \right\},
\end{equation}
which has dimension $d = \frac{k(k+1)}{2}$. The tangent space of the manifold $\text{SPD}(k)$ at any point $X \in \spd(k)$ is the vector space of symmetric matrices $\sym(k)$. The mathematical construct $(\spd(k), +, . )$ is not a vector space under element-wise addition and element-wise scalar multiplication. This can be seen from the observation that $a \in \mathbb{R}_{\leq 0}, X \in \spd(k) \implies a X \not \in \spd(k)$. Instead, $\spd(k)$ is an open cone of $\mathbb{R}^{k \times k}$ and, as such, naturally possesses a smooth manifold structure which can further be equipped with different Riemannian metrics \citep{thanwerdas2021n}. However, Arsigny \textit{et al.} \citep{arsigny2007geometric} showed in a surprising result that $\spd(k)$ can be given a vector space structure $(\spd(k), \oplus, \odot)$ via the operations $\oplus, \odot$ defined in Table~\ref{Tab:SPDoperations}, where $\text{Expm, Logm}$ denote the matrix exponential and matrix logarithm. This fact is central for the proofs provided in the present paper.

{ 
\renewcommand{\arraystretch}{1.2}
\begin{table}[!htbp]
\centering
\begin{tabular}{|l|l|l|}
\hline
\textsc{Operation}            & \textsc{Notation} & \textsc{Expression}                                      \\ \hline
$\text{Addition}$ & $X_1 \oplus  X_2$  & $ \Expm \left[ \Logm X_1 + \Logm X_2  \right]$ \\ \hline
$\text{Subtraction}$ & $X_1 \ominus X_2$  & $ \Expm \left[ \Logm X_1 - \Logm X_2  \right]$ \\ \hline
$\text{Scalar Multiplication}$          & \hspace{0.18cm} $a \odot X$        & $ \Expm \left[ a . \Logm X  \right]$           \\ \hline
\end{tabular}
\vspace{0.30cm} 
\caption{Operations turning the manifold $\spd(k)$ into a vector space. Expm and Logm denote the matrix exponential and logarithms, respectively. $X_1, X_2$ belong to $\spd(k)$ while $a \in \mathbb{R}$ is a scalar.}
\label{Tab:SPDoperations}
\end{table}
}
\vspace{-0.1cm}
\textbf{Riemannian structure} Arsigny \textit{et al.} further define a Riemannian metric on $\spd(k)$, called the \textit{log-Euclidean metric}, which induces the following distance:
\begin{align}
    \label{eq:FN}
    \rho_{\textup{LE}}(X_1, X_2)  &= \norm{\Logm X_1 - \Logm X_2}_{F}, \quad \forall X_1, X_2 \in \spd(k),
\end{align}
where $\norm{.}_F$ denotes the Frobenius norm on matrices. Importantly, the log-Euclidean metric \citep{arsigny2006log} gives a unique and simple closed form expression for the Fréchet mean in terms of matrix logarithm and matrix exponential
\begin{align*}
    \overline{X}_{\textup{LE}} &= \Expm \left[\frac{1}{n}\sum_{i=1}^{n} \Logm X_i  \right],
\end{align*}
for the dataset $X_1, ..., X_n \in \spd(k)$.

\textbf{Maps between spaces} Lastly, we present maps that will help us define the differential privacy mechanism proposed in the next section. Consider the map $\Vecd: \textup{SYM}(k) \rightarrow \mathbb{R}^{\frac{k(k+1)}{2}} $ defined as $\Vecd(X) =  \left[\Diag(X)^{T}, \sqrt{2} \Upperdiag(X)^{T} \right]^{T}$, where $\Diag: \textup{SYM}(k) \rightarrow \mathbb{R}^{k}$ and $\Upperdiag : \textup{SYM}(k) \rightarrow \mathbb{R}^{\frac{k(k-1)}{2}}$ build vectors from the diagonal, and from the strictly upper diagonal entries, of the matrix $X$. The map $\Vecd$ is invertible and we denote by $\Invvecd$ its inverse. Specifically, the spaces $\spd(k),\textup{SYM}(k) $ and $\mathbb{R}^{\frac{k(k+1)}{2}}$ are now related as follows:

\begin{center}
    \begin{tikzcd}[row sep=large, column sep=large]
\spd(k) \arrow[r,shift left, "\Logm"]  
    &
\arrow[l,shift left,"\Expm"] 
\sym(k) \arrow[r,shift left, "\Vecd"]  
    &
\mathbb{R}^{\frac{k(k+1)}{2}}. \arrow[l,shift left,"\Invvecd"]
\end{tikzcd}
\end{center}


\section{Tangent Gaussian Mechanism on SPD manifolds}
\label{sec:tangent-gaussian}

We can now introduce our differential privacy mechanism for statistical summaries on the $\spd(k)$ manifold. Let $f : \mathcal{X}^n \rightarrow \spd(k)$ be any $\spd(k)$-valued summary that needs to be privatized. The proposed mechanism is based on the log Gaussian distribution on the SPD manifold \citep{schwartzman2016lognormal} which is defined as follows. Consider a mean $M \in \spd(k)$ and a tangent covariance $ \Sigma \in \spd\left(\frac{k(k+1)}{2}\right)$. We can (i) first map the mean $M$ to the tangent space $\sym(k)$ of $\spd(k)$ at the identity using the matrix Logarithm $\Logm$, then (ii) to $\mathbb{R}^{\frac{k(k+1)}{2}}$ using the map $\Vecd$ introduced in the previous section, and (iii) consider whether the result follows a traditional Gaussian distribution.

\begin{definition}[Log Gaussian Distribution on $\spd(k)$ \citep{schwartzman2016lognormal}]
\label{def:log-normal}
Given a mean $M \in \textup{SPD}(k)$, and a tangent covariance $\Sigma \in \textup{SPD}\left(\frac{k(k+1)}{2}\right)$,  we say that $X \sim \mathcal{LN}(M, \Sigma)$ follows a log Gaussian distribution on $\textup{SPD}(k)$ if $\Vecd [\Logm  X] \sim \mathcal{N}(\Vecd[\Logm M], \Sigma)$ follows a (regular) Gaussian distribution with mean $\Vecd \Logm M$ and covariance matrix $\Sigma$ on $\mathbb{R}^{\frac{k(k+1)}{2}}$.

The density $p(X| M,\Sigma)$ is then given by  
\begin{align*}
     &\frac{J(X)}{(2\pi)^{\frac{d}{2}}(\det{\Sigma})^{\frac{1}{2}}} \exp{\left(-\frac{1}{2} \Vecd (\Logm X - \Logm M)^{T} \Sigma^{-1} \Vecd(\Logm X - \Logm M) \right)}
\end{align*}
where $d = \frac{k(k+1)}{2}$, $J(X) = \frac{1}{\det{X}} \prod_{i<j} h(\lambda_i, \lambda_j),$ and $h(\lambda_i, \lambda_j) = \begin{cases}(\log \lambda_i - 
    \log \lambda_j ) & \lambda_i > \lambda_j \\
    \frac{1}{\lambda_i} & \lambda_i = \lambda_j \end{cases}$, with $\lambda_i, \lambda_j$ eigenvalues of the matrix $X$.

\end{definition}

The definition of log Gaussian distribution on the $\spd(k)$ manifold allows us to define our proposed tangent Gaussian mechanism.

\begin{definition}[tangent Gaussian Mechanism]
\label{def:tan-gaussian}
Consider any statistical summary $f : \mathcal{X}^n \rightarrow \textup{SPD}(k)$ on the manifold $\textup{SPD}(k)$ equipped with log-Euclidean metric. Given $\sigma^2 > 0$, we define the tangent Gaussian mechanism $\textbf{\textup{A}}_{\textup{TG}}: \mathcal{X}^n \rightarrow \textup{SPD}(k)$, as 
\begin{align*}
   \textbf{\textup{A}}_{\textup{TG}}(\mathcal{D}) &= X, \text{where } X \sim  \mathcal{LN}(f(\mathcal{D}),  \sigma^2 \text{I}).
\end{align*}
\end{definition}
We now state our main theorem, which shows that the privacy loss of the tangent Gaussian mechanism is normally distributed with mean and variance parametrized by the log-Euclidean distance. Proof is given in Appendix \ref{sec:distproof}

\begin{algorithm}[t]
    \SetKwInOut{Input}{Inputs}
    \SetKwInOut{Output}{Output}

    \Input{Dataset $\mathcal{D}$ of $k \times k$ SPD matrices of size $n$,   $\text{sigma-type} \in \{\text{'classical'}, \text{'analytic'}$ \},  $\Delta_{\textup{LE}}$ the log-Euclidean sensitivity of $f$, $\epsilon > 0$, $\delta \in (0,1)$ and additionally $\epsilon < 1$ if $\text{sigma-type}$ is $ \text{'classical'}$,  the noise calibration subroutines $\textsc{CLASSIC}$,  $ \textsc{ANALYTIC}$ which take $\Delta_{LE}, \epsilon, \delta$ and provide $\sigma$. }
    \Output{Private $f(\mathcal{D})$ }

    {\textbf{if} \text{sigma-type} is 'classical' \textbf{then} $\sigma = \textsc{classic}(\Delta_{\textup{LE}}, \epsilon, \delta) $; \textbf{else} $\sigma = \textsc{analytic}(\Delta_{\textup{LE}}, \epsilon, \delta) $\;}
    Compute non private output :  $ f_{\text{np}} := f(\mathcal{D})$ \\  
    Compute mean of Gaussian distribution: $M := \Vecd[\Logm f_{\text{np}} ]$,  $M \in \mathbb{R}^{\frac{k(k+1)}{2}}$ \\
    Sample from the Gaussian distribution in $\mathbb{R}^{\frac{k(k+1)}{2}}$: $N \sim \mathcal{N}(M, \sigma^2I)$ \\
    Map sample to the SPD manifold:  $f_{p} := \Expm [\Invvecd N]$  \\
    Return private $f_{p}$
    \caption{tangent Gaussian Mechanism for $f:\mathcal{X}^{n} \rightarrow \spd(k)$}
\end{algorithm}

\begin{theorem}[Distribution of Privacy Loss for the tangent Gaussian Mechanism]  \label{Th:dist}
Let $\mathcal{\mathbf{A}}_{\text{TG}}$ be a tangent Gaussian mechanism with variance $\sigma^2$. Its privacy loss is normally distributed as
\begin{align*}
L_{\textbf{\textup{A}}_{\textup{TG}}, \mathcal{D}, \mathcal{D'}} \sim \mathcal{N} \left( \frac{\rho_{\textup{LE}}^2(f(\mathcal{D}), f(\mathcal{D}')) }{2\sigma^2}, \frac{\rho_{\textup{LE}}^2(f(\mathcal{D}), f(\mathcal{D}'))}{\sigma^2} \right).
\end{align*}
\end{theorem}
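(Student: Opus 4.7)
The plan is to compute the privacy loss random variable directly, using the closed-form density of the log Gaussian distribution provided in Definition \ref{def:log-normal}, and then to recognize the resulting expression as an affine function of a Gaussian random vector.

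First I would write down the ratio of densities. For adjacent datasets $\mathcal{D} \sim \mathcal{D}'$, the outputs $\mathbf{A}_{\textup{TG}}(\mathcal{D})$ and $\mathbf{A}_{\textup{TG}}(\mathcal{D}')$ are log Gaussian with common covariance $\sigma^2 I$ and means $f(\mathcal{D})$, $f(\mathcal{D}')$, respectively. The crucial simplification is that the factor $J(X)$ and the normalization $(2\pi)^{d/2}(\det \Sigma)^{1/2}$ in the density depend only on $X$ and $\Sigma$, not on the mean $M$, so they cancel in the ratio. Writing $u = \Vecd[\Logm X]$, $m_1 = \Vecd[\Logm f(\mathcal{D})]$, and $m_2 = \Vecd[\Logm f(\mathcal{D}')]$, the privacy loss function reduces to
\begin{equation*}
\ell_{\mathbf{A}_{\textup{TG}}, \mathcal{D}, \mathcal{D}'}(X) = \frac{1}{2\sigma^2}\left(\|u - m_2\|^2 - \|u - m_1\|^2\right).
\end{equation*}

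Next I would expand the quadratic forms as
\begin{equation*}
\ell = \frac{1}{\sigma^2}\, u^{\top}(m_1 - m_2) + \frac{\|m_2\|^2 - \|m_1\|^2}{2\sigma^2},
\end{equation*}
which is an affine function of $u$. Evaluating at $X = Y \sim \mathbf{A}_{\textup{TG}}(\mathcal{D})$, we have $u \sim \mathcal{N}(m_1, \sigma^2 I)$ by the very definition of the log Gaussian. Thus $L_{\mathbf{A}_{\textup{TG}}, \mathcal{D}, \mathcal{D}'}$ is Gaussian. Its mean simplifies to $\|m_1 - m_2\|^2/(2\sigma^2)$ after a short algebraic manipulation, and its variance is $\|m_1 - m_2\|^2/\sigma^2$.

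Finally I would identify $\|m_1 - m_2\|^2$ with $\rho_{\textup{LE}}^2(f(\mathcal{D}), f(\mathcal{D}'))$. The map $\Vecd$ is designed so that $\|\Vecd(S)\|_2 = \|S\|_F$ for any $S \in \sym(k)$ (the diagonal entries contribute once, and the off-diagonal entries are scaled by $\sqrt{2}$ so as to account for their double appearance in the Frobenius norm). Hence
\begin{equation*}
\|m_1 - m_2\|_2 = \|\Logm f(\mathcal{D}) - \Logm f(\mathcal{D}')\|_F = \rho_{\textup{LE}}(f(\mathcal{D}), f(\mathcal{D}')),
\end{equation*}
by Eq.~\eqref{eq:FN}, which yields the claimed distribution.

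I do not anticipate a serious obstacle: the proof is essentially a Gaussian calculation on $\mathbb{R}^{d}$ transported back to $\spd(k)$ via the isometric chart $\Vecd \circ \Logm$. The only points that require care are (i) confirming that the $J(X)$ factor truly cancels in the density ratio (it does, since it is mean-independent) and (ii) verifying the isometry property of $\Vecd$ with respect to the Frobenius norm; both are immediate from the definitions.
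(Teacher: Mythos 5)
Your proposal is correct and follows essentially the same route as the paper: both transport the computation to $\mathbb{R}^{d}$ via the isometry $\Vecd\circ\Logm$, cancel the mean-independent factors ($J(X)$ and the normalizer) in the density ratio, expand the difference of squared norms into an affine function of the Gaussian vector $u\sim\mathcal{N}(m_1,\sigma^2 I)$, and identify $\|m_1-m_2\|_2$ with $\rho_{\textup{LE}}(f(\mathcal{D}),f(\mathcal{D}'))$. The only cosmetic difference is that the paper phrases the same algebra through the vector-space operations $\oplus,\ominus$ on $\spd(k)$ and an auxiliary lemma on inner products with log Gaussian variables, whereas you work directly in the vectorized coordinates.
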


This distribution is analogous to the distribution of the privacy loss for the Euclidean Gaussian mechanism, but with the log-Euclidean sensitivity instead of the Euclidean sensitivity \citep{dwork2014algorithmic,balle2018improving}. Consequently, our theoretical analysis of the tangent Gaussian mechanism - deriving privacy guarantees from the distribution of the privacy loss above - closely follows the steps of the analysis for the Euclidean Gaussian case. Specifically, we can proceed in two ways with either a $(1)$ classical approach where sufficient conditions are used to show the mechanism is $(\epsilon, \delta)$-DP as in \cite{dwork2014algorithmic}, or with an $(2)$ analytic approach where the utility is better by using sufficient and necessary conditions \citep{balle2018improving}.

\begin{theorem}[Privacy Guarantee of tangent Gaussian Mechanism]\label{th:TGPrivacy} Consider $f:\mathcal{X}^n \rightarrow \spd(k)$ with log-Euclidean sensitivity $\Delta_{\textup{LE}}$.    ~\\
\vspace{-0.4cm}
\begin{enumerate}
    \item (Classical) Given $\epsilon, \delta \in (0,1)$, choosing $\sigma = \Delta_{\textup{LE}} \sqrt{2 \ln (1.25/\delta) }/\epsilon$, makes the tangent Gaussian mechanism $(\epsilon, \delta)$-differentially private.
    \item (Analytic) Given $\epsilon \geq 0, \delta \in (0,1)$ and $\Phi$ the cumulative distribution of the standard Gaussian, choosing any $\sigma$ that satisfies $\Phi(\frac{\Delta_{\textup{LE}}}{2\sigma} - \frac{\epsilon \sigma}{\Delta_{\textup{LE}}})- \exp(\epsilon) \Phi(\frac{\Delta_{\textup{LE}}}{2\sigma} - \frac{\epsilon \sigma}{\Delta_{\textup{LE}}}) \leq \delta$  makes the tangent Gaussian mechanism $(\epsilon, \delta)$-differentially private.
\end{enumerate}
\end{theorem}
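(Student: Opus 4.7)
The plan is to reduce everything to Theorem~\ref{Th:dist} and then mirror the classical Euclidean Gaussian-mechanism arguments, since that theorem tells us the privacy loss has exactly the same distributional form as in the Euclidean case, but with $\rho_{\textup{LE}}$ playing the role of the $\ell_2$ sensitivity. Write $\rho := \rho_{\textup{LE}}(f(\mathcal{D}), f(\mathcal{D}'))$ for a fixed pair of adjacent datasets; by Theorem~\ref{Th:dist}, we can represent $L_{\textbf{A}_{\textup{TG}},\mathcal{D},\mathcal{D}'} = \rho^2/(2\sigma^2) + (\rho/\sigma)\, Z$ with $Z \sim \mathcal{N}(0,1)$. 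Standardizing then gives the two key identities that drive both proofs:
\begin{align*}
\mathbb{P}[L_{\textbf{A}_{\textup{TG}},\mathcal{D},\mathcal{D}'} \geq \epsilon] &= \Phi\!\left(\tfrac{\rho}{2\sigma} - \tfrac{\epsilon\sigma}{\rho}\right), \\
\mathbb{P}[L_{\textbf{A}_{\textup{TG}},\mathcal{D},\mathcal{D}'} \leq -\epsilon] &= \Phi\!\left(-\tfrac{\rho}{2\sigma} - \tfrac{\epsilon\sigma}{\rho}\right).
\end{align*}
Both expressions are monotone nondecreasing in $\rho$ on $(0, \infty)$, so the worst case over adjacent datasets is attained at $\rho = \Delta_{\textup{LE}}$; this reduction to the sensitivity is the common first step for both parts.

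For part~1 (Classical) I would invoke the sufficient condition $\mathbb{P}[L \geq \epsilon] \leq \delta$ stated in Section~\ref{sec:prelims}. After the reduction above, it suffices to show $\Phi(\Delta_{\textup{LE}}/(2\sigma) - \epsilon\sigma/\Delta_{\textup{LE}}) \leq \delta$ for the claimed choice of $\sigma$. I would then apply the standard Mills-ratio tail bound $\mathbb{P}[Z \geq t] \leq \tfrac{1}{t\sqrt{2\pi}}\exp(-t^2/2)$ to the complementary form $\Phi(-t)$ with $t = \epsilon\sigma/\Delta_{\textup{LE}} - \Delta_{\textup{LE}}/(2\sigma)$. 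Substituting $\sigma = \Delta_{\textup{LE}}\sqrt{2\ln(1.25/\delta)}/\epsilon$ and simplifying recovers the Dwork--Roth calculation verbatim (the constants $1.25$ and $2$ arise from the same algebra as in the Euclidean proof, using $\epsilon, \delta \in (0,1)$ to absorb cross-terms of order $\epsilon^2$). The main technical subtlety here is exactly that absorption step: one must check the inequality $\ln(1/\delta) \geq \tfrac{1}{2}\epsilon^2 + \ln(\sqrt{2/\pi}\, \epsilon \,(2\ln(1.25/\delta))^{-1/2}\cdot \tfrac{1}{\epsilon})$ or equivalent, which is where the constant $1.25$ originates; it holds under the stated range $\epsilon < 1$.

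For part~2 (Analytic) I would invoke the sufficient and necessary condition from Section~\ref{sec:prelims}, namely $\mathbb{P}[L \geq \epsilon] - e^{\epsilon}\, \mathbb{P}[L \leq -\epsilon] \leq \delta$. Substituting the two standardized expressions above yields, for a fixed pair of adjacent datasets with distance $\rho$,
\begin{align*}
\Phi\!\left(\tfrac{\rho}{2\sigma} - \tfrac{\epsilon\sigma}{\rho}\right) - e^{\epsilon}\, \Phi\!\left(-\tfrac{\rho}{2\sigma} - \tfrac{\epsilon\sigma}{\rho}\right) \leq \delta .
\end{align*}
(This corrects what appears to be a sign typo in the theorem statement's second $\Phi$.) It then suffices to verify that this expression, viewed as a function of $\rho$ with $\epsilon, \sigma$ fixed, is nondecreasing on $(0, \Delta_{\textup{LE}}]$, so that the worst case is at $\rho = \Delta_{\textup{LE}}$, which is precisely the condition the theorem imposes on $\sigma$.

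The hard part is this last monotonicity claim. A direct differentiation in $\rho$ gives a linear combination of two Gaussian densities with coefficients $\tfrac{1}{2\sigma} + \tfrac{\epsilon\sigma}{\rho^2}$ and $\tfrac{1}{2\sigma} - \tfrac{\epsilon\sigma}{\rho^2}$ multiplied by $e^{\epsilon}$, and the nontrivial algebraic manipulation is showing that the ratio of the two densities equals $e^{-\epsilon}$ times a factor that exactly cancels $e^\epsilon$, yielding a nonnegative derivative. This is the Balle--Wang argument and is the only place where the proof departs from the routine Euclidean template; every other step is a direct transcription using $\rho_{\textup{LE}}$ in place of $\|\cdot\|_2$.
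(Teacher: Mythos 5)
Your proposal is correct and follows essentially the same route as the paper, which simply reduces to the privacy-loss distribution of Theorem~\ref{Th:dist} and then defers to the Euclidean arguments of Dwork--Roth (classical, via the Gaussian tail bound) and Balle--Wang (analytic, via the necessary-and-sufficient condition and the density-ratio cancellation) with $\Delta_{\textup{LE}}$ in place of $\Delta_{\textup{L2}}$; you merely spell out the details the paper leaves to those references, and you correctly flag the sign typo in the theorem's second $\Phi$. One small slip: $\mathbb{P}[L \leq -\epsilon] = \Phi(-\tfrac{\rho}{2\sigma} - \tfrac{\epsilon\sigma}{\rho})$ is \emph{not} monotone nondecreasing in $\rho$ on $(0,\infty)$ (its argument's derivative changes sign at $\rho^2 = 2\epsilon\sigma^2$), but this does not affect your argument since the classical part uses only the first tail probability and the analytic part relies on the monotonicity of the Balle--Wang difference, which you establish separately.
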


Proofs are is given in Appendix \ref{sec:TGPrivacyPRoof}. Algorithm~1 shows the implementation of the mechanism.

\section{Privatizing the Fréchet mean}

In the previous section, $f$ is any function that outputs a summary statistics on $\spd(k)$. In this section, we seek to privatize the Fréchet mean $f$ of the log-Euclidean metric. We first compute its sensitivity and then provide its utility. In what follows, $\mathcal{B}_{r}(M) = \left\{ X |  \rho_{\textup{LE}}(M,X) < r  \right\}$ denotes an open geodesic ball of radius $ 0 < r < \infty $ centered at $M \in \spd(k)$. 

\begin{theorem}[Sensitivity of Log-Euclidean Fréchet Mean]\label{th:sensitivity}
Given data in $\mathcal{B}_{r}(M)$ for some $0< r < \infty$ and $M \in \textup{SPD}(k),$ the sensitivity of the log-Euclidean Fréchet mean verifies: $\Delta_{\textup{LE}} \leq \frac{2r}{n}$.
\end{theorem}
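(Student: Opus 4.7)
The plan is to exploit the closed-form expression for the log-Euclidean Fréchet mean together with the definition of the log-Euclidean distance, which both involve the matrix logarithm. Since the map $\Logm$ turns the Fréchet mean into an ordinary arithmetic average in $\sym(k)$ (equipped with the Frobenius norm), the argument reduces to the well-known Euclidean sensitivity computation for the sample mean, followed by a triangle-inequality bound over the geodesic ball $\mathcal{B}_r(M)$.

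Concretely, I would take two adjacent datasets $\mathcal{D} = \{X_1,\ldots,X_{n-1},X_n\}$ and $\mathcal{D}' = \{X_1,\ldots,X_{n-1},X_n'\}$ differing in exactly one entry (without loss of generality the last one). Writing $f(\mathcal{D})$ and $f(\mathcal{D}')$ via the closed form
\begin{align*}
\Logm f(\mathcal{D}) \;=\; \tfrac{1}{n}\sum_{i=1}^{n}\Logm X_i, \qquad \Logm f(\mathcal{D}') \;=\; \tfrac{1}{n}\Bigl(\sum_{i=1}^{n-1}\Logm X_i + \Logm X_n'\Bigr),
\end{align*}
and then applying the definition of $\rho_{\textup{LE}}$, the shared $n-1$ terms cancel inside the Frobenius norm, leaving
\begin{align*}
\rho_{\textup{LE}}(f(\mathcal{D}),f(\mathcal{D}')) \;=\; \tfrac{1}{n}\,\bigl\|\Logm X_n - \Logm X_n'\bigr\|_F \;=\; \tfrac{1}{n}\,\rho_{\textup{LE}}(X_n,X_n').
\end{align*}

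To finish, I would use the assumption that the data all lie in $\mathcal{B}_r(M)$ and invoke the triangle inequality for the distance $\rho_{\textup{LE}}$: $\rho_{\textup{LE}}(X_n,X_n') \leq \rho_{\textup{LE}}(X_n,M) + \rho_{\textup{LE}}(M,X_n') < 2r$. Taking the supremum over all adjacent $\mathcal{D} \sim \mathcal{D}'$ with entries in $\mathcal{B}_r(M)$ yields $\Delta_{\textup{LE}} \leq 2r/n$, as claimed.

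There is no real obstacle here: the only subtlety is to make sure we apply the Frobenius triangle inequality (which is immediate since it is a genuine norm on $\sym(k)$), and to observe that the log-Euclidean structure makes $\Logm$ an isometry from $(\spd(k),\rho_{\textup{LE}})$ onto $(\sym(k),\|\cdot\|_F)$, so the closed-form average commutes cleanly with distance computations. The bound is tight in the limit where $X_n$ and $X_n'$ approach antipodal boundary points of $\mathcal{B}_r(M)$ in the $\Logm$-image.
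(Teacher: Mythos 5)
Your proof is correct and follows essentially the same route as the paper's: use the closed form to reduce the distance between the two Fréchet means to $\tfrac{1}{n}\rho_{\textup{LE}}(X_n,X_n')$ after cancellation of the shared terms, then bound this by $\tfrac{2r}{n}$ via the triangle inequality through the center $M$. No gaps.
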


Note above theorem can also obtained from \cite[Theorem 2]{reimherr2021differential} by setting $\kappa=0$.  The utility of the tangent Gaussian mechanism for a Fréchet mean query is then given below.

\begin{theorem}[Utility]
\label{th:utility}
Let $\textbf{\textup{A}}_{\textup{TG}}$ be the (classical) tangent Gaussian mechanism, $\mathcal{B}_{r}(M)$ a geodesic ball of radius $0 < r < \infty$ and center $M \in \mathcal{M}$ containing the dataset $\mathcal{D}$ and $f$ the Fréchet mean. The utility of the mechanism $\textbf{\textup{A}}_{\textup{TG}}$ is given by:
    \begin{align*}
        &\rho_{\textup{LE}}^2(f(\mathcal{D}), \mathcal{\textbf{\textup{A}}_{\textup{TG}}}(\mathcal{D}))) \sim \sigma^2 \chi^2_{d},  \\ 
        &\mathbb{E}[\rho_{\textup{LE}}^2(f(\mathcal{D}),  \mathcal{\textbf{\textup{A}}_{\textup{TG}}}(\mathcal{D}))] = \frac{4r^2 \ln(1.25/\delta)d}{n^2 \epsilon^2} \qquad \text{with } d= dim(\textup{SPD}(k))= \frac{k(k+1)}{2},
    \end{align*}
    where $\chi^2_{d}$ represents the chi squared distribution with $d$ degree of freedoms.
\end{theorem}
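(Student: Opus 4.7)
The plan is to reduce the utility calculation to a standard Euclidean Gaussian computation by exploiting that the log-Euclidean distance corresponds to an isotropic Gaussian noise in the Euclidean coordinate system $\mathbb{R}^{d}$ with $d=k(k+1)/2$. Concretely, I will use the chain $\textup{SPD}(k) \xrightarrow{\Logm} \sym(k) \xrightarrow{\Vecd} \mathbb{R}^{d}$ already displayed in the paper to transport every quantity to $\mathbb{R}^{d}$, where noise is standard Gaussian with covariance $\sigma^{2} I_{d}$.

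The key preliminary step is to establish that $\Vecd$ is an isometry from $(\sym(k), \norm{\cdot}_F)$ to $(\mathbb{R}^{d}, \norm{\cdot}_{2})$. This is exactly what the $\sqrt{2}$ factor in the definition $\Vecd(X) = [\Diag(X)^{T}, \sqrt{2}\,\Upperdiag(X)^{T}]^{T}$ is designed for: for any symmetric $X$,
\begin{align*}
\norm{X}_{F}^{2} \;=\; \sum_{i} X_{ii}^{2} + 2\sum_{i<j} X_{ij}^{2} \;=\; \norm{\Vecd(X)}_{2}^{2}.
\end{align*}
Combining this with the definition of $\rho_{\textup{LE}}$ in \eqref{eq:FN} yields, for any $A,B \in \spd(k)$,
\begin{align*}
\rho_{\textup{LE}}(A,B) \;=\; \norm{\Logm A - \Logm B}_{F} \;=\; \norm{\Vecd[\Logm A] - \Vecd[\Logm B]}_{2}.
\end{align*}

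With this in hand, the distributional claim is immediate. By Definition~\ref{def:tan-gaussian}, $\mathbf{A}_{\textup{TG}}(\mathcal{D}) = X$ with $X \sim \mathcal{LN}(f(\mathcal{D}), \sigma^{2} I_{d})$, so by Definition~\ref{def:log-normal} the random vector $Z := \Vecd[\Logm X] - \Vecd[\Logm f(\mathcal{D})]$ is distributed as $\mathcal{N}(0, \sigma^{2} I_{d})$. Substituting in the isometry identity gives
\begin{align*}
\rho_{\textup{LE}}^{2}(f(\mathcal{D}), \mathbf{A}_{\textup{TG}}(\mathcal{D})) \;=\; \norm{Z}_{2}^{2} \;=\; \sigma^{2}\, \big(\norm{Z/\sigma}_{2}^{2}\big) \;\sim\; \sigma^{2}\chi^{2}_{d},
\end{align*}
since $Z/\sigma$ is a standard Gaussian in $\mathbb{R}^{d}$. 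This establishes the distributional statement.

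Finally, for the expectation I will use $\mathbb{E}[\chi^{2}_{d}] = d$, yielding $\mathbb{E}[\rho_{\textup{LE}}^{2}] = \sigma^{2} d$. Plugging in the classical calibration $\sigma = \Delta_{\textup{LE}}\sqrt{2\ln(1.25/\delta)}/\epsilon$ from Theorem~\ref{th:TGPrivacy}(1) together with the sensitivity bound $\Delta_{\textup{LE}} \leq 2r/n$ from Theorem~\ref{th:sensitivity} gives the stated closed form (up to the numerical constant, which simply propagates through the substitution). I do not expect any serious obstacle: the only subtle point is the $\sqrt{2}$-isometry of $\Vecd$, and once that is noted the rest is bookkeeping, since the problem reduces exactly to computing the squared norm of an isotropic Gaussian in $\mathbb{R}^{d}$.
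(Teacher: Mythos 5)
Your proposal is correct and follows essentially the same route as the paper's own proof: both reduce the computation to $\mathbb{R}^{d}$ via the isometry $\Vecd\circ\Logm$ (the paper phrases this through the $\oplus$ reformulation $\mathbf{A}_{\textup{TG}}(\mathcal{D})=f(\mathcal{D})\oplus N$ with $N\sim\mathcal{LN}(I,\sigma^{2}I)$, which is the same observation as your $Z\sim\mathcal{N}(0,\sigma^{2}I_{d})$), then apply $\mathbb{E}[\chi^{2}_{d}]=d$ and substitute the classical calibration and the sensitivity bound. One small remark: substituting $\Delta_{\textup{LE}}\leq 2r/n$ actually yields an upper bound $8r^{2}\ln(1.25/\delta)d/(n^{2}\epsilon^{2})$ rather than the equality with constant $4$ displayed in the theorem statement, a discrepancy present in the paper itself and which your ``constant propagates through'' remark glosses over but does not worsen.
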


Proofs of Th.~\ref{th:sensitivity} and Th.~\ref{th:utility} are given in Appendix \ref{sec:sens_and_utility}. We compare these results with those of the Riemannian Laplace mechanism \citep{reimherr2021differential}, denoted $\mathcal{\textbf{A}_{\textup{RL}}}$. 

\textbf{Utility}:  We compare the utility in terms of size $k$ of spd matrices $k \times k$ because dependancy on other factors  $n,\epsilon$ are same. Utility of the Riemannian Laplace mechanism  has an expectation given by $\mathbb{E}[\rho_{\textup{LE}}^2(f(\mathcal{D}), \mathcal{\textbf{A}_{\textup{RL}}}(\mathcal{D}))] =  \mathcal{O}(k^4)$.  By contrast, our tangent Gaussian mechanism provides $\mathbb{E}[\rho^2_{\textup{LE}}(f(\mathcal{D}),  \mathcal{\textbf{A}_{\textup{TG}}}(\mathcal{D}))] =  \mathcal{O}(\ln(1/\delta)k^2)$. Hence  our mechanism has significantly better utility in terms of dimension.

\textbf{Pure DP vs Approx DP}: It should be noted that our privacy guarantees are weaker than Riemannian Laplace. In practice, $\delta$ is chosen to be cryptographically small and typically $\delta \ll 1/n$ \cite{DPorg-flavoursofdelta}. 


    

\textbf{Theoretical Results}: The authors of \cite{reimherr2021differential} characterize the utility in terms of its expectation $\mathbb{E}[\rho^2_{\textup{LE}}(f(\mathcal{D}), \mathcal{\mathbf{A}(\mathcal{D})})]$. By contrast, our results yield a more complete picture, as we derive the probability distribution of $\rho_{\textup{LE}}^2(f(\mathcal{D}), \mathcal{\textbf{A}}(\mathcal{D})))$ given that we are tailoring mechanism for flat geometry of SPD matrices with log-Euclidean metric.

\section{Experiments}


We use the Riemannian Laplace mechanism as the baseline and recall that this mechanism uses the Riemannian Laplace distribution \eqref{lapMech}. Efficient sampling from the Riemannian Laplace distribution is only discussed for $(i)$ SPDManifold with affine-invariant metric and $(ii)$ Hypersphere with Euclidean metric in \citet{reimherr2021differential} and we didn't find any sampling procedure from this distribution on SPD manifold with log-Euclidean metric in \cite{reimherr2021differential, hajri2016riemannian} and hence we used MCMC sampling in our experiments. 


\subsection{Experiments on Synthetic Datasets}

\begin{figure}[t!]%
    \centering
    {{\includegraphics[width=1\linewidth]{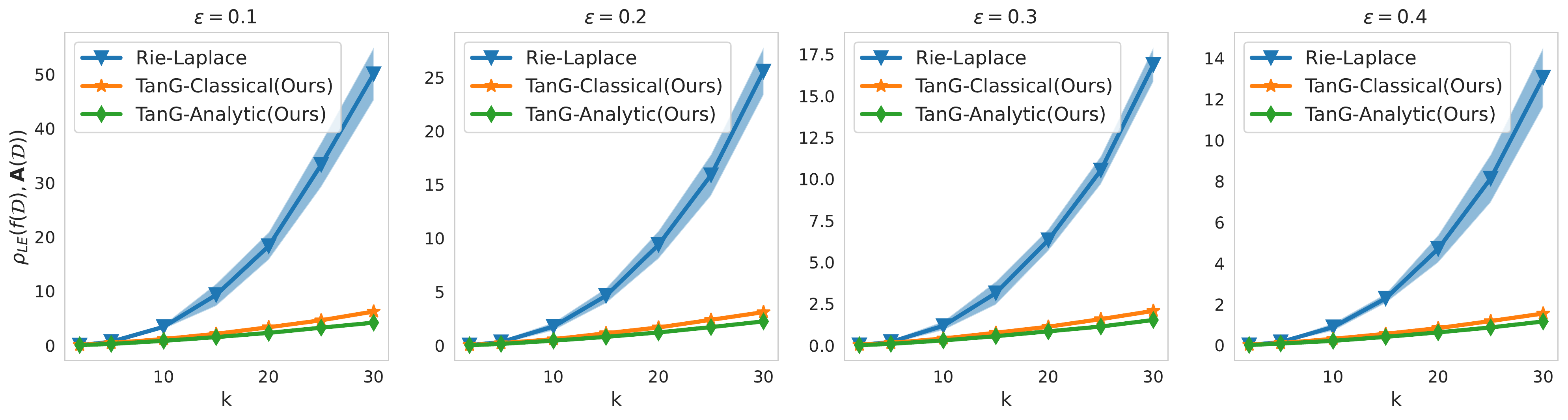} }}%
    \qquad
    {{\includegraphics[width=1\linewidth]{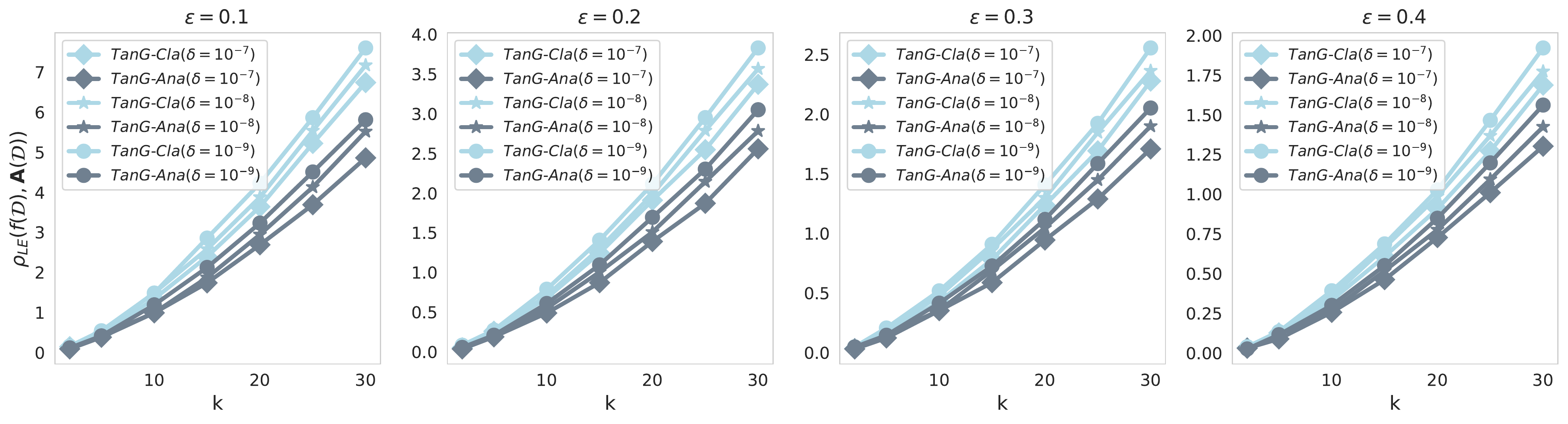} }}%
    {{\includegraphics[width=1\linewidth]{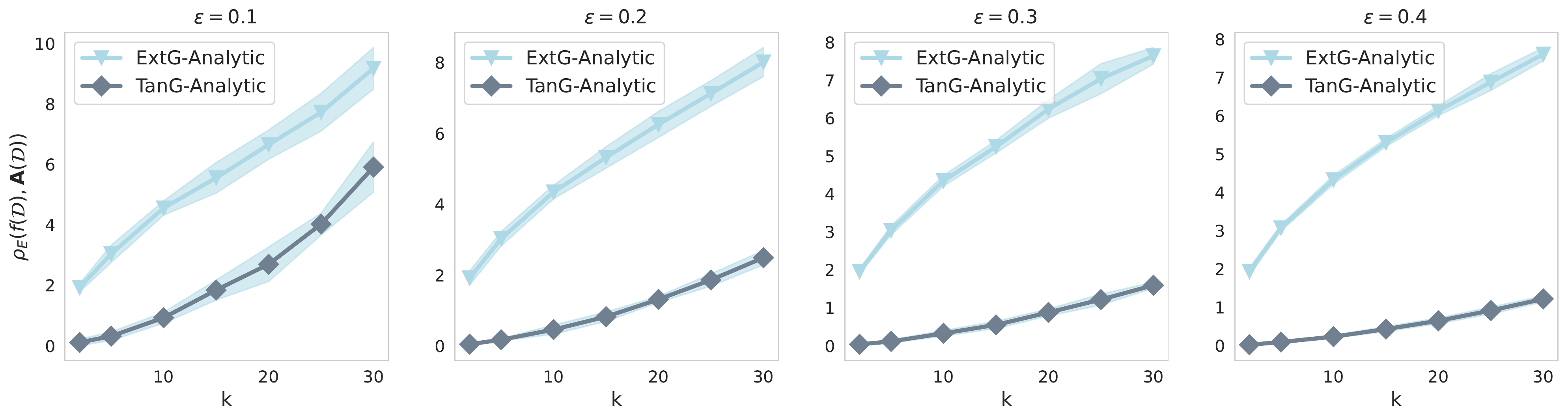} }}%
    \caption{Utilities on synthetic data for \emph{Rie-Laplace} the Riemannian Laplace mechanism \citep{reimherr2021differential}, and \emph{TanG Classical}, \emph{TanG-Cla} and \emph{TanG Analytic}, \emph{TanG-Ana} our proposed tangent Gaussian mechanisms (classical and analytic versions), and \emph{ExtG-Analytic} the Extrinsic analytic gaussian mechanism for different matrix sizes $k$ and privacy parameter $\epsilon$.  $\rho_{LE}$ and $\rho_{E}$ denotes log-Euclidean and Euclidean distance respectively. Note output of extrinsic mechanism is not a SPD matrix and hence deviation is measured in standard Euclidean distance. }%
    \label{fig:Synthetic-Data-Experiments}%
    \vspace{-0.2cm}
\end{figure}

The utility depends on privacy parameters $(\epsilon, \delta)$, the size $k$ of the matrices, the dataset size $n$ and $r$ the radius of the geodesic ball containing the dataset. The utilities of the tangent Gaussian and Riemannian Laplace mechanisms have the same dependency on $n,\epsilon,r$, such that their differentiating parameters are $\delta, k$. Consequently, our experiments on synthetic data fix $n, \epsilon, r$ and vary $\delta, k$.

We also consider Extrinsic approach suggested in \cite{reimherr2021differential} where Fréchet mean is seen to be belonging to Symmetric matrix  and noise from Euclidean normal distribution is added, specifically $\textbf{A}_{\textup{EX}}(\mathcal{D}) = X,  X \sim \Invvecd{ \left( \mathcal{N}\left( \Vecd{\Bar{\text{X}}_{\textup{LE}}}, \sigma^2 I\right) \right)}$ for appropriate $\sigma$. If $r$ is radius of log-Euclidean geodesic ball of data,  extrinsic sensitivity is given by $\Delta_{\textup{EX}} = 2(\exp{(r)}-1 )/n$ \citep[Proposition 1]{reimherr2021differential}. It should be emphasized that resultant privatized Fréchet mean is \emph{no longer a \textup{SPD} matrix}. Hence \citet{reimherr2021differential} compared deviation between private and non private Fréchet mean in the standard Euclidean norm.
 
We generate random $k \times k$ SPD matrices as follows: $(i)$ generate $k$ real values $(\lambda_1, \dots, \lambda_k)$ uniformly in $[e^{-r}, e^{r}]$, $(ii)$ build $D$ the diagonal matrix with $D_{ii} = \lambda_i$, for $i \in \{1, ..., k\}$, $(iii)$ generate a $k \times k$ random orthogonal matrix $E$ with the Haar distribution, and $(iv)$ build the SPD matrix as: $X = E D E^{T}$. This process generates SPD matrices, that can be shown to belong to the geodesic ball $\mathcal{B}_{\sqrt{k}r}(I)$ with $I$ the identity matrix: $\norm{\Logm X}_{F} = \sqrt{ \sum_{i=1}^{k} (\ln \lambda_i)^2} \leq \sqrt{k r^2} = \sqrt{k}r$. We use $n=500$ and $r=1/4$ in our experiments and hence $\Delta_{\text{LE}} \leq \sqrt{k}/1000$.

Fig.~\ref{fig:Synthetic-Data-Experiments} (first) compares utilities using a fixed $\delta = 10^{-6}$ for our mechanism, a MCMC burn-in of $50,000$ for the Riemannian Laplace mechanism, and different values of $k \in \{2, 5, 10, 15, 20, 25, 30 \}$ and $\epsilon \in \{0.1, 0.2, 0.3, 0.4\}$. Each experiment is repeated 10 times, the results are averaged and the band $(\mu-2 \sigma, \mu + 2 \sigma)$ is shown, where $\mu$ and $\sigma$ are the mean and standard deviation, respectively, of the associated result. The $\sigma$ is small for our mechanism, and does not appear on the plots. The tangent Gaussian mechanisms (ours) yield almost $\times 10$ utility improvement for larger $k$, for each $\epsilon$. Fig.~\ref{fig:Synthetic-Data-Experiments} (middle) shows that, as expected, our utility is not significantly impacted by different values of $\delta \in \{ 10^{-7}, 10^{-8}, 10^{-9} \}$.   Fig.~\ref{fig:Synthetic-Data-Experiments} (bottom) compares utilities between Extrinsic Gaussian mechanism (analytic) and tangent Gaussian mechanism (analytic) in Euclidean distance and shows proposed mechanism is better.

\subsection{Experiments on Real-World Datasets}

We run experiments on covariance descriptors of real-world  images. Covariance descriptors \citep{tuzel2006region} have been widely used for face and person recognition \citep{tuzel2007human, zhang2011gabor,pang2008gabor,krivzaj2013combining,ma2014covariance,cai2010matching,zeng2015efficient,matsukawa2016hierarchical}, action and gesture recognition \citep{cirujeda20144dcov, hussein2013human, sharaf2015real}, 3D shape analysis \citep{tabia2014covariance,ma2014covariance}, medical imaging \citep{khan2015global,cirujeda20163}; and even recently as layers in neural networks \citep{yu2017second} - which makes them interesting data to privatize.

Let  $\mathcal{I} \in \mathbb{R}^{h \times w \times c }$ be an image of height $h$, width $w$ and with $c$ channels, where $c$ is $1$ for gray scale images and $3$ for RGB images. Let $\phi: \mathbb{R}^{h \times w \times c} \rightarrow  \mathbb{R}^{ hw \times k}$ be a feature extractor of dimension $k$, i.e. $\phi(\mathcal{I})(\textbf{x})$ is a $k$-dimensional vector at each spatial coordinate $\textbf{x}$ in the image's domain $S$. Given a small $\eta >0$, the \emph{covariance descriptor} $\textsf{R}_{\eta} : \mathbb{R}^{h \times w \times c } \rightarrow \spd(k)$ associated with $\phi$ is defined as 
\begin{align*}
    \textsf{R}_{\eta}(\mathcal{I}) &= \left[ \frac{1}{\abs{\mathcal{S}}} \sum_{\textbf{x} \in S} (\phi(\mathcal{I})(\textbf{x}) - \mu)(\phi({\mathcal{I}})(\textbf{x}) - \mu)^{T} \right] + \eta . I,
\end{align*}
where $\mu =\abs{S}^{-1} \sum_{\textbf{x} \in \mathcal{S}} \phi(\mathcal{I})(\textbf{x}) $, and $\eta . I$ ensures $\textsf{R}_{\eta}(\mathcal{I}) \in \spd(k)$ with $\eta$ usually set to $10^{-6}$. Our experiments follow \citep{tuzel2006region, jayasumana2015kernel} and use the covariance descriptors associated with the feature  vector given as $\phi(\mathcal{I})(\textbf{x}) = \left[x, y, \mathcal{I}, \abs{\mathcal{I}_x}, \abs{\mathcal{I}_y}, \abs{\mathcal{I}_{xx}}, \abs{\mathcal{I}_{yy}}, \sqrt{\abs{\mathcal{I}_x}^2 + \abs{\mathcal{I}_y}^2 }, \arctan \left( \frac{\abs{\mathcal{I}}_x}{\abs{\mathcal{I}}_y} \right)\right]$, where $ \textbf{x}= (x, y)$, intensities derivatives are denoted by $\mathcal{I}_x, \mathcal{I}_y, \mathcal{I}_{xx}, \mathcal{I}_{yy}$ and we added the intensity values $ \mathcal{I}$ for each channel compared to \citep{tuzel2006region, jayasumana2015kernel}.
For gray scale images, $\phi(\mathcal{I})(\textbf{x})$ is a $9$-dimensional vector that makes $\textsf{R}_{\eta}(\mathcal{I})$ a $9 \times 9$ SPD matrix, while for RGB images $\phi(\mathcal{I})(\textbf{x})$ is a $11$-dimensional vector that makes $\textsf{R}_{\eta}(\mathcal{I})$ a $11 \times 11$ SPD matrix. We are within the assumptions of Th.~\ref{th:utility} since such covariance descriptors belong to geodesic balls centered at $I$, as shown by the following theorem.
\begin{figure}[t!]
            \centering
            \includegraphics[width=\linewidth]{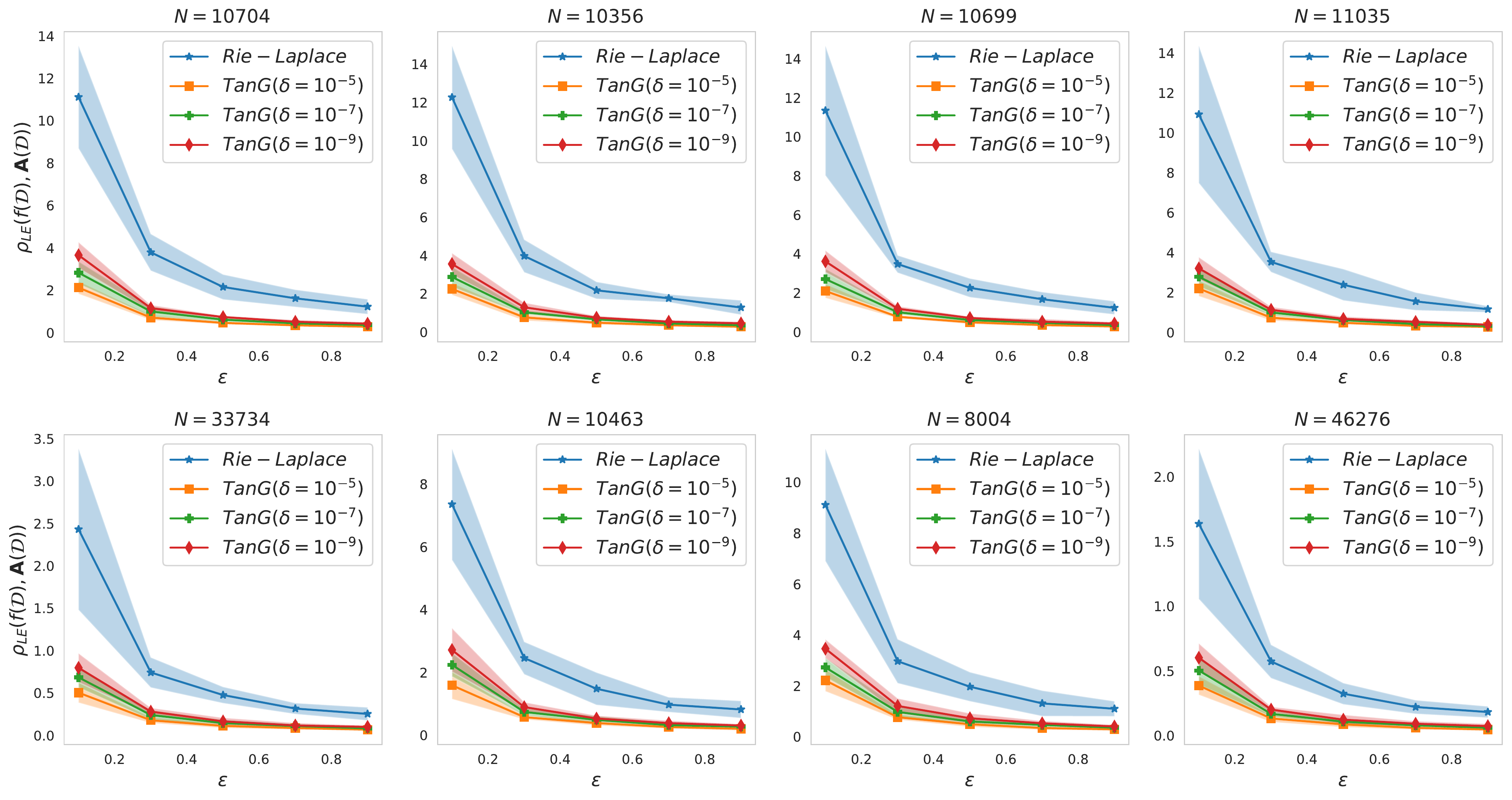}
            \caption{Utilities on the private Fréchet means for different privacy parameters $\epsilon$, and real-world datasets of sizes $N$. Top: PathMNIST (RGB images yielding $11 \times 11$ SPD descriptors). Bottom: OctoMNIST (gray scale images yielding $9 \times 9$ SPD matrices). \emph{Rie-Laplace} is the Riemannian Laplacian mechanism  \citep{reimherr2021differential} and \emph{TanG} the tangent Gaussian mechanism for different values of $\delta$ (ours). We also show the $(\mu-2 \sigma, \mu + 2 \sigma)$ band.} 
            \label{fig:real-world}
            \vspace{-0.5cm}
\end{figure}
\begin{theorem}\label{testing} Let $\textsf{\textup{R}}_{\eta}(\mathcal{I})$ be the covariance descriptor associated with the feature vector $\phi(\mathcal{I})$ above.
\begin{enumerate}
    \item If $\mathcal{I}$ is a gray scale image, then $\norm{ \Logm \textsf{\textup{R}}_{\eta}(\mathcal{I})}_{F} \leq \sqrt{9} \max \left\{ \abs{ \ln \eta}, \abs{\ln (14+\eta)}  \right\} $.
    \item If $\mathcal{I}$ is a RGB image, then $\norm{ \Logm  \textsf{\textup{R}}_{\eta}(\mathcal{I})}_{F} \leq \sqrt{11}\max \left\{ \abs{ \ln \eta}, \abs{\ln (16+\eta)}  \right\} $.
\end{enumerate}
\end{theorem}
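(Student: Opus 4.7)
The plan is to bound the eigenvalues of $\textsf{R}_{\eta}(\mathcal{I})$ inside a two-sided interval $[\eta, B+\eta]$ and then translate that into a bound on $\|\Logm \textsf{R}_{\eta}(\mathcal{I})\|_F$ through the spectral formula $\|\Logm X\|_F^2 = \sum_{i=1}^k (\ln \lambda_i(X))^2$. Writing $\textsf{R}_{\eta}(\mathcal{I}) = C + \eta I$ where $C$ is the empirical covariance matrix of the feature map $\phi(\mathcal{I})$ over the pixel domain $\mathcal{S}$, the matrix $C$ is positive semi-definite, so its eigenvalues are non-negative and $\lambda_i(\textsf{R}_{\eta}(\mathcal{I})) = \lambda_i(C) + \eta \in [\eta,\, \lambda_{\max}(C)+\eta]$. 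This immediately handles the lower bound.

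For the upper bound, the key step is $\lambda_{\max}(C) \le \Tr(C)$, which holds because all eigenvalues of $C$ are non-negative. Since $\Tr(C)$ decomposes as the sum of empirical variances of each coordinate of $\phi(\mathcal{I})$, the task reduces to bounding each $\Var(\phi_i) = |\mathcal{S}|^{-1}\sum_{\mathbf{x}\in \mathcal{S}}(\phi_i(\mathcal{I})(\mathbf{x}) - \mu_i)^2$ by $B_i^2$ where $\phi_i$ ranges over an interval of length at most $B_i$ (so that both $\phi_i(\mathbf{x})$ and $\mu_i$ lie in that interval). I would use the standing normalization that spatial coordinates and intensities lie in $[0,1]$, and read off: $x,y,\mathcal{I} \in [0,1]$, the first-order absolute derivatives $|\mathcal{I}_x|,|\mathcal{I}_y| \in [0,1]$, the second-order absolute derivatives $|\mathcal{I}_{xx}|,|\mathcal{I}_{yy}| \in [0,2]$, the gradient magnitude $\sqrt{|\mathcal{I}_x|^2 + |\mathcal{I}_y|^2} \in [0,\sqrt 2]$, and $\arctan(|\mathcal{I}_x|/|\mathcal{I}_y|) \in [0,\pi/2]$. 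Summing the squared ranges gives $\Tr(C) \le 14$ in the gray-scale case; in the RGB case three channel intensities replace the single $\mathcal I$, each contributing an additional unit and yielding $\Tr(C) \le 16$.

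Combining the two bounds, every eigenvalue $\lambda_i$ of $\textsf{R}_{\eta}(\mathcal{I})$ lies in $[\eta, B+\eta]$ with $B=14$ (gray) or $B=16$ (RGB), and hence $|\ln \lambda_i| \le \max\{|\ln \eta|, |\ln(B+\eta)|\}$ since $\ln$ is monotone on $(0,\infty)$. Plugging into the Frobenius identity,
\begin{equation*}
\|\Logm \textsf{R}_{\eta}(\mathcal{I})\|_F \;=\; \sqrt{\sum_{i=1}^k (\ln \lambda_i)^2} \;\le\; \sqrt{k}\,\max\{|\ln \eta|,\,|\ln(B+\eta)|\},
\end{equation*}
which gives both claims after substituting $k=9$ and $k=11$.

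The main obstacle is not the eigenvalue/logarithm manipulation, which is routine, but the concrete constants $14$ and $16$: they depend on the assumed normalization of pixel coordinates and intensity values and on the choice of discrete derivative operator. I would therefore spend most of the proof writing up explicit bounds on each $\phi_i$ under a clearly stated convention, and noting that the argument goes through with any normalization at the cost of replacing $14$ and $16$ by the corresponding sums of squared ranges. Everything else---the decomposition $\textsf{R}_{\eta}=C+\eta I$, the PSD-trace majorization, and the diagonalization of $\Logm$---is a direct application of linear algebra.
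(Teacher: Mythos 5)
Your overall route is sound and is essentially a repackaging of the paper's argument: the paper also reduces the claim to a two-sided eigenvalue bound $\eta \le \lambda_i(\textsf{R}_\eta(\mathcal{I})) \le L$ and then uses $\norm{\Logm X}_F \le \sqrt{k}\max_i|\ln\lambda_i|$ (via $\norm{A}_F\le\sqrt{k}\norm{A}_2$ rather than your exact spectral identity, but to the same effect). Your upper bound $\lambda_{\max}(C)\le\Tr(C)=|\mathcal{S}|^{-1}\sum_{\mathbf{x}}\norm{\phi(\mathcal{I})(\mathbf{x})-\mu}_{\textup{L2}}^2$ is numerically identical to the paper's triangle-inequality bound $\norm{C}_2\le|\mathcal{S}|^{-1}\sum_{\mathbf{x}}\norm{(\phi-\mu)(\phi-\mu)^T}_2$, since the spectral norm of a rank-one matrix $aa^T$ is $\norm{a}_{\textup{L2}}^2$; both then use nonnegativity of the feature entries to drop $\mu$. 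So the structure buys nothing and loses nothing relative to the paper.

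The genuine problem is the arithmetic of the constant, which you yourself identify as the crux and then get wrong under your own stated conventions. With $|\mathcal{I}_{xx}|,|\mathcal{I}_{yy}|\in[0,2]$ the sum of squared ranges in the gray-scale case is $1+1+1+1+1+4+4+2+\pi^2/4\approx 17.5$, not $\le 14$, so the claimed bound $|\ln(14+\eta)|$ is not established for general $\eta$ (it happens not to matter for $\eta=10^{-6}$ only because $|\ln\eta|$ dominates the max). The paper avoids this by normalizing the derivative convolution kernels (the Sobel-type stencils are divided by $4$ and $32$) so that \emph{all} first- and second-order derivative magnitudes are bounded by $1$; the resulting sum is $7+2+\pi^2/4\approx 11.47$, giving $12$ for gray-scale and $14$ for RGB in the appendix, which a fortiori implies the looser main-text constants $14$ and $16$. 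To repair your proof, either adopt that kernel normalization or carry your larger constant honestly through to the conclusion; as written, the step ``summing the squared ranges gives $\Tr(C)\le 14$'' does not follow from the ranges you listed.
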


Proof is given in Appenidx~\ref{sec:GeodesicProof}

\subsubsection{Experiments on medical imaging data}

\begin{figure}[t!]
            \centering
            \includegraphics[width=\linewidth]{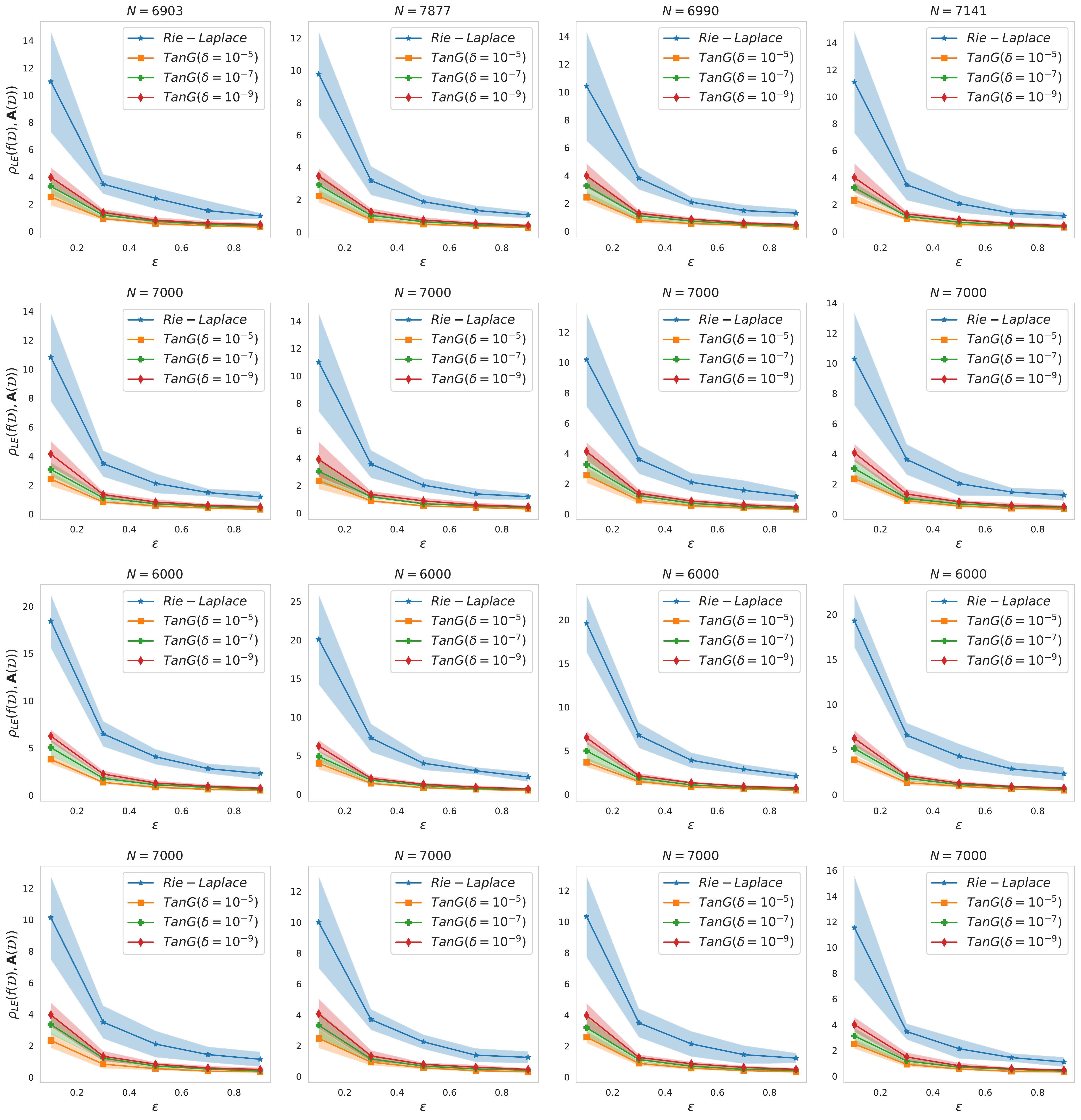}
            \caption{Utilities on the private Fréchet means for different privacy parameters $\epsilon$, and real-world datasets of sizes $N$. First and Second Row: Fréchet mean from MNIST, KMNIST (Gray scale images yielding $9 \times 9$ SPD descriptors). Third and Fourth Row: Fréchet mean from CIFAR10, FashionMNIST (RGB images yielding $11 \times 11$ SPD descriptor). \emph{Rie-Laplace} means the Riemannian Laplacian mechanism. \emph{TanG} means the tangent Gaussian Mechanism for different values of $\delta$ (ours). We also show the mean-2$*$std, mean+2$*$std bands.}
            \label{fig:additional-real-world}
\end{figure}

We use images from 4 classes of the medical imaging datasets PATHMNIST (gray scale) and OctoMNIST (RGB) from MedMNISTv2 \citep{yang2021medmnist}, compute the 4 class-wise Fréchet means of their covariance descriptors ($\eta=10^{-6}$), which we privatize using the Riemannian Laplace and tangent Gaussian (analytical) mechanisms. We avoid using extrinsic approach because extrinsic sensitivity is extremely high Fig.~\ref{fig:real-world} shows the utilities for different values of $\epsilon \in \{0.1, 0.3, 0.5, 0.7, 0.9 \}$ and $\delta \in \{10^{-5}, 10^{-7}, 10^{-9} \}$. The datasets sizes $N$ range from $8000$ to $46276$ images. The sensitivity of the Fréchet mean, required for the mechanisms, is calculated using Th.~\ref{testing} and Th.~\ref{th:sensitivity}. Each experiment is repeated 10 times and averaged and the band $(\mu-2 \sigma, \mu + 2 \sigma)$ is shown, where $\mu$ and $\sigma$ are the mean and standard deviation, respectively, of the associated result. Our mechanism also outperforms the Riemannian Laplace on real-world datasets, and the utility gap is higher for smaller values of $N$ and $\epsilon$.


\subsubsection{Experiments on standard imaging  data}

In this section, we perform additional experiments on standard image datasets. We choose MNIST, KMNIST \citep{clanuwat2018deep} (gray scale images) and CIFAR10, FashionMNIST \citep{xiao2017fashion} (RGB images) as datasets. We extract images from 4 classes for each dataset and compute the corresponding class-wise Fréchet means of their covariance
descriptors $(\eta = 10^{-6})$, which we privatize using the Riemannian Laplace Mechanism \citep{reimherr2021differential} and our proposed mechanism tangent Gaussian (Analytic).  Fig.~\ref{fig:additional-real-world} shows the utilities for different values of $\epsilon \in \{0.1, 0.3, 0.5, 0.7, 0.9\}$
and $\delta \in \{10^{-5}, 10^{-7}, 10^{-9}\}$. Each experiment is repeated 10 times, the results are averaged and the band $(\mu-2 \sigma, \mu + 2 \sigma)$ is shown, where $\mu$ and $\sigma$ are the mean and standard deviation, respectively, of the associated result. Fig.~\ref{fig:additional-real-world} illustrates the better utility of our mechanism compared to  the Riemannian Laplace mechanism.

\section{Conclusion and Future Work}
\label{sec:conclusion}

Differential privacy for geometric statistics and learning is at a very early stage. We proposed a tangent Gaussian mechanism that is specific to the SPD manifold equipped with the log-Euclidean metric, and that outperforms the only existing baseline. One limitation of our work is that the proposed mechanism is restricted to one manifold with one specific metric. While the log-Euclidean metric is one of the most important metrics on the SPD manifold, future work should investigate how to build a Gaussian mechanism that works on any complete Riemannian manifold. We could define such as a mechanism using a Riemannian Gaussian distribution derived in \cite{pennec2006intrinsic}. The main challenge would be to show that the associated procedure is $(\epsilon, \delta)$ differentially private. Future work can also seek to privatize other geometric statistical algorithms like geodesic regression or principal geodesic analysis.

\newpage
\bibliography{bibolography}
\bibliographystyle{tmlr}

\appendix 
\counterwithin{theorem}{section}
\counterwithout{definition}{section}

\section{Proofs}\label{app-proofs} 
Consider $k \in \mathbb{N}^*$. In this supplementary material, $\norm{.}_{\textup{L2}}$ and $\inp{}{}_{2}$ denote the standard Euclidean inner product and the Euclidean norm on vectors. i.e., for all $x,y \in \mathbb{R}^{k}$
\begin{align*}
    \inp{x}{y}_{\textup{L2}} &= \sum_{i=1}^{p} x_iy_i. \\
    \norm{x}_{\textup{L2}} &= \sqrt{\inp{x}{x}_{\textup{L2}}}.
\end{align*}
Then, $\inp{}{}_{F}, \norm{.}_{F}$ denotes Frobenius inner product and Frobenius norm respectively, i.e., given $A,B \in \mathbb{R}^{k \times k}$
\begin{align*}
    \inp{A}{B}_{F} = \text{Tr}[A^{T}B]. \\
    \norm{A}_{F} = \sqrt{ \inp{A}{A}_{F}}.
\end{align*}
Lastly, $\norm{.}_{2}$ denotes the spectral norm of matrices. i.e., for all $A \in \mathbb{R}^{k \times k}$
\begin{align*}
    \norm{A}_{2} &= \sup_{\norm{x}_{\text{L2}} \neq 0} \frac{\norm{Ax}_{\textup{L2}}}{\norm{x}_{\textup{L2}}}.
\end{align*}
\subsection{Useful Lemmas}

In this section, we derive the distribution of the privacy loss. Its proof requires us to first introduce the following definitions.

\begin{definition}[Diffeomorphism and Isometry]
    A diffeomorphism between two manifolds $\mathcal{M}_1$ and $\mathcal{M}_2$ is an invertible smooth function whose inverse is also smooth. A diffeomorphism $\phi$ between two Riemannian manifolds $(\mathcal{M}_1, g_{1})$, $(\mathcal{M}_2, g_{2})$ is called an isometry if it preserves distances i.e., $\rho_{g_{1}}(p, q) = \rho_{g_{2}}(\phi(p), \phi(q))$ for all $p, q \in \mathcal{M}_1$.  
\end{definition}

Note that $\Logm$ is a diffeomorphism from $\spd(k)$ to $\sym(k)$ and $\Vecd$ is a diffeomorphism from $\sym(k)$ to $\mathbb{R}^{\frac{k(k+1)}{2}} $, making $\Vecd \Logm $ a diffeomorphism from $\spd(k)$ to $\mathbb{R}^{\frac{k(k+1)}{2}}$.  Importantly for our derivations in the proofs of this Subsection, the operation $\Vecd  \Logm $ preserves the distances -- making it an isometry.  

\begin{lemma}[$\Vecd  \Logm $ is an isometry]
Let $\Logm : \textup{SPD}(k) \rightarrow \textup{SYM}(k)$ be the matrix logarithm and let $\Vecd: \textup{SYM}(k) \rightarrow \mathbb{R}^{\frac{k(k+1)}{2}} $ be defined as $\Vecd(X) =  \left[\Diag(X)^{T}, \sqrt{2} \Upperdiag(X)^{T} \right]^{T}$. Then $\Vecd \Logm : \textup{SPD}(k) \rightarrow \mathbb{R}^{\frac{k(k+1)}{2}}$ is an isometry from $\textup{SPD}(k)$ equipped with the log-Euclidean metric to standard Euclidean space $\mathbb{R}^{\frac{k(k+1)}{2}}$ with standard $\textup{L2}$ metric, i.e., 
\begin{align}
    \label{Eq:iso}
    \rho_{\textup{LE}}(X_1,X_2) &=  \rho_{\textup{L2}}( \Vecd \Logm X_1, \Vecd \Logm X_2 ),
\end{align}
where $X_1, X_2 \in \textup{SPD}(k)$. Hence we have that 
\begin{align}
    \label{Eq:iso-useful}
    \norm{\Logm X}_{F} = \norm{\Vecd \Logm X}_{\textup{L2}}.
\end{align}
\end{lemma}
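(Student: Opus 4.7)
The plan is to reduce the isometry claim to a direct computation about how $\Vecd$ interacts with the Frobenius norm on symmetric matrices, exploiting the linearity of both $\Logm$'s image-side structure and $\Vecd$ itself. First I would note that $\Vecd$ is a linear map on $\sym(k)$, so $\Vecd(\Logm X_1) - \Vecd(\Logm X_2) = \Vecd(\Logm X_1 - \Logm X_2)$. Combined with the definition $\rho_{\textup{LE}}(X_1,X_2) = \norm{\Logm X_1 - \Logm X_2}_F$, the claimed equality \eqref{Eq:iso} reduces to showing that for every $A \in \sym(k)$,
\begin{equation*}
\norm{A}_F = \norm{\Vecd A}_{\textup{L2}}.
\end{equation*}

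Next I would verify this by expanding both sides entrywise. Since $A$ is symmetric, $\norm{A}_F^2 = \sum_i A_{ii}^2 + 2\sum_{i<j} A_{ij}^2$, whereas by the definition $\Vecd(A) = [\Diag(A)^T, \sqrt{2}\,\Upperdiag(A)^T]^T$ we get $\norm{\Vecd A}_{\textup{L2}}^2 = \sum_i A_{ii}^2 + \sum_{i<j}(\sqrt{2} A_{ij})^2 = \sum_i A_{ii}^2 + 2\sum_{i<j} A_{ij}^2$. The two expressions coincide, which is precisely why the $\sqrt{2}$ factor appears in the definition of $\Vecd$: it compensates for the fact that each off-diagonal entry of a symmetric matrix is counted twice in the Frobenius sum but only once by $\Upperdiag$.

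Finally, taking $A = \Logm X_1 - \Logm X_2$ yields \eqref{Eq:iso}, and taking $A = \Logm X$ (equivalently applying \eqref{Eq:iso} with $X_2 = I$ so that $\Logm X_2 = 0$) immediately yields the stated corollary \eqref{Eq:iso-useful}. There is no real obstacle here; the only subtlety worth being careful about is ensuring the linear structure of $\Vecd$ is used to pull the subtraction outside, and that the symmetry of $A$ is invoked so that the off-diagonal terms in $\norm{A}_F^2$ collapse into the $i<j$ sum.
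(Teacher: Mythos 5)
Your proof is correct and follows essentially the same route as the paper's: both reduce the claim to the entrywise identity $\norm{A}_F^2 = \sum_i A_{ii}^2 + 2\sum_{i<j} A_{ij}^2 = \norm{\Vecd A}_{\textup{L2}}^2$ for symmetric $A$, using the linearity of $\Vecd$ and the $\sqrt{2}$ weighting on the off-diagonal entries, and then obtain \eqref{Eq:iso-useful} by specializing to $X_2 = I$. The only difference is cosmetic (you pull the linearity of $\Vecd$ out at the start rather than at the end).
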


\begin{proof}
    Let $X_1,X_2$ be elements of $\spd(k)$. We have:
    \begin{align*}
        &\rho_{\textup{LE}}^2(X_1,X_2)\\
        &= \norm{\Logm X_1 - \Logm X_2}_{F}^2 \\
        &= \sum_{i,j}^k (\Logm X_1 - \Logm X_2 )_{ij}^2   \\
        &= \sum_{i<j}^k(\Logm X_1 - \Logm X_2 )_{ij}^2 + \sum_{i>j}^k(\Logm X_1 - \Logm X_2 )_{ij}^2 + \sum_{i=j}^k(\Logm X_1 - \Logm X_2 )_{ij}^2 \\
        &= 2.\sum_{i<j}^k(\Logm X_1 - \Logm X_2)_{ij}^2 +  \sum_{i=j}^k(\Logm X_1 - \Logm X_2 )_{ij}^2 \\
        &= \norm{ \sqrt{2}\Upperdiag(\Logm X_1 - \Logm X_2)}_{\text{L2}}^2 + \norm{\Diag{(\Logm X_1 - \Logm X_2})}_{\text{L2}}^2 \\
        &= \norm{\Vecd (\Logm X_1 - \Logm X_2)}_{\text{L2}}^2 \\
        &= \norm{\Vecd \Logm X_1 -  \Vecd\Logm X_2}_{\text{L2}}^2 \\
        &= \rho_{\textup{L2}}^2( \Vecd \Logm X_1, \Vecd \Logm X_2 ) .
    \end{align*}
    from which we have Eq.~\eqref{Eq:iso}. Eq.~\eqref{Eq:iso-useful} follows as 
    \begin{align*}
        \norm{\Logm X}_{F} &= \rho_{\textup{LE}}(X,I) = \rho_{\textup{L2}}( \Vecd \Logm X, \Vecd \Logm I ) = \norm{\Vecd \Logm X}_{\textup{L2}}. 
    \end{align*}
\end{proof}

Now, we prove some useful properties of the Log Gaussian distribution, denoted $\mathcal{LN}$, that we will use later. Essentially, we show that the Log Gaussian distribution behaves ``nicely" with vector space structure of $\spd(k)$. We recall that the vector space operations on the SPD manifold are defined as follows, 
\begin{align}
    X_1 \oplus  X_2 &= \Expm \left[ \Logm X_1 + \Logm X_2  \right]. \label{eq:plus} \\
    X_1 \ominus X_2 &= \Expm \left[ \Logm X_1 - \Logm X_2  \right]. \label{eq:minus}
\end{align}

\begin{lemma}
\label{le:lnprops}
 Take $k \in \mathbb{N}$. Let $I$ denote the $k \times k$ identity matrix, and consider $M,C \in \textup{SPD}(k)$, $\Sigma \in \textup{SPD}(\frac{k(k+1)}{2})$ and $\chi^2_{d}$ the chi-square distribution with $d$ degrees of freedom. Then:
        \begin{align}
               X \sim \mathcal{LN}(I, \Sigma) &\implies X \oplus M \sim \mathcal{LN}(\text{M}, \Sigma). \label{Eq:LNP1} \\ 
               X \sim \mathcal{LN}(I, \sigma^2I)  &\implies \inp{\Logm C}{\Logm X}_{F} \sim \mathcal{N}(0, \sigma^2\norm{\Logm C}_{F}^2 ). \label{Eq:LNP2} \\
               X \sim \mathcal{LN}(I, \sigma^2I) &\implies \norm{\Logm X}_{F}^2 \sim \sigma^2\chi^{2}_{\frac{k(k+1)}{2}}. \label{Eq:LNP3}
        \end{align}
\end{lemma}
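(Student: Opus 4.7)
The plan is to push each of the three statements through the diffeomorphism $\Vecd \circ \Logm$ and then use only standard facts about multivariate Gaussians on $\mathbb{R}^{d}$ with $d = k(k+1)/2$. By the definition of the Log Gaussian distribution, $X \sim \mathcal{LN}(M,\Sigma)$ is literally the statement $\Vecd \Logm X \sim \mathcal{N}(\Vecd \Logm M, \Sigma)$, so every claim is the image under this map of a well-known Gaussian identity.

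For \eqref{Eq:LNP1}, I would start from $X \sim \mathcal{LN}(I,\Sigma)$, which reads $\Vecd \Logm X \sim \mathcal{N}(0,\Sigma)$ since $\Logm I = 0$. The key algebraic observation is that the vector-space operation $\oplus$ on $\spd(k)$ was defined precisely so that $\Logm(X \oplus M) = \Logm X + \Logm M$ (Eq.~\eqref{eq:plus}), and $\Vecd$ is linear. Hence $\Vecd \Logm (X \oplus M) = \Vecd \Logm X + \Vecd \Logm M$, which is a deterministic translation of a $\mathcal{N}(0,\Sigma)$ vector by $\Vecd \Logm M$; the resulting distribution is $\mathcal{N}(\Vecd \Logm M, \Sigma)$, i.e.\ $X \oplus M \sim \mathcal{LN}(M,\Sigma)$.

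For \eqref{Eq:LNP2} and \eqref{Eq:LNP3}, I need an inner-product version of the isometry already established. The computation mirrors the one in the preceding lemma: splitting the Frobenius inner product into diagonal and strictly upper-triangular contributions yields
\begin{equation*}
\inp{A}{B}_{F} = \inp{\Diag A}{\Diag B}_{\textup{L2}} + 2 \inp{\Upperdiag A}{\Upperdiag B}_{\textup{L2}} = \inp{\Vecd A}{\Vecd B}_{\textup{L2}},
\end{equation*}
so $\Vecd \Logm$ preserves inner products, not only norms. Applying this with $A = \Logm C$ and $B = \Logm X$, \eqref{Eq:LNP2} reduces to the fact that a fixed linear functional $\inp{\Vecd \Logm C}{\cdot}_{\textup{L2}}$ applied to $Y := \Vecd \Logm X \sim \mathcal{N}(0,\sigma^2 I)$ gives a scalar Gaussian with mean $0$ and variance $\sigma^2\norm{\Vecd \Logm C}_{\textup{L2}}^{2}$, which equals $\sigma^2\norm{\Logm C}_{F}^{2}$ by Eq.~\eqref{Eq:iso-useful}.

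Finally, for \eqref{Eq:LNP3}, applying Eq.~\eqref{Eq:iso-useful} gives $\norm{\Logm X}_{F}^{2} = \norm{Y}_{\textup{L2}}^{2}$, where $Y \sim \mathcal{N}(0,\sigma^2 I_{d})$, and $\norm{Y}_{\textup{L2}}^{2}/\sigma^{2}$ is by definition a $\chi^{2}_{d}$ random variable with $d = k(k+1)/2$. There is no real obstacle anywhere in this plan: the only non-routine point is remembering to upgrade the isometry lemma from a norm identity to an inner-product identity via the diagonal/off-diagonal splitting, after which everything becomes a direct translation of standard Euclidean Gaussian facts.
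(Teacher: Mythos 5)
Your proposal is correct and follows essentially the same route as the paper: map everything to $\mathbb{R}^{k(k+1)/2}$ via $\Vecd \circ \Logm$ and invoke the standard translation, linear-functional, and chi-square properties of the multivariate Gaussian. The one place you are actually more careful than the paper is in explicitly upgrading the isometry from the norm identity of Eq.~\eqref{Eq:iso-useful} to the inner-product identity $\inp{A}{B}_{F} = \inp{\Vecd A}{\Vecd B}_{\textup{L2}}$ needed for \eqref{Eq:LNP2}, a step the paper's proof uses but cites only the norm version for.
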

\begin{proof} We first recall standard properties of multivariate normal distribution. Let $m, a \in \mathbb{R}^{p}$ and $\Sigma, I \in \mathbb{R}^{p \times p}$ then following properties hold true. 
\begin{align}
    x \sim \mathcal{N}(m, \Sigma) &\implies a +x \sim \mathcal{N}(a+m, \Sigma). \label{eq:1}  \\
    x \sim \mathcal{N}(m, \Sigma) &\implies a^{T}x \sim \mathcal{N}(a^{T}m,  a^{T}\Sigma a). \label{eq:2}\\
    x \sim \mathcal{N}(0, \sigma^2I ) &\implies \norm{x}_{\textup{L2}}^2 \sim \sigma^2\chi^2_{p}. \label{eq:3}  
\end{align}
where $\chi^2$ denotes chi-square distribution.
We prove the properties (a)-(c) below.~\\

(a) Distribution of $X \oplus M$.
\begin{align*}
    \Vecd[\Logm[X \oplus M]] 
    &\stackrel{(*)}{=}\Vecd[\Logm[ \Expm \left[ \log X + \log M  \right]]] \\
    &= \Vecd[\Logm X + \Logm M] \\
    &= \Vecd[\Logm X] + \Vecd[ \Logm M]\\ &\stackrel{(**)}{\sim} \mathcal{N}(\Vecd[ \Logm M] , \Sigma).
\end{align*}    
where in $(*)$ we used Eq.~\ref{eq:plus} and in  $(**)$ Eq.~\eqref{eq:1}. ~\\

(b) Distribution of  $\inp{\Logm C}{\Logm X}_{F}$.
\begin{align*}
    \inp{\Logm C}{\Logm X}_{F}
    &\stackrel{(*)}{=} \inp{\Vecd [\Logm C]}{ \Vecd [\Logm X]}_{\text{L2}} \\
    &\stackrel{(**)}{\sim} \mathcal{N}\left(\inp{\Vecd [\Logm C]}{0}_{\text{L2}},  \Vecd [\Logm C]^{T}\sigma^2 I \Vecd [\Logm C] \right) \\
    & \sim \mathcal{N}(0,  \sigma^2\norm{\Vecd [\Logm C]}_{\text{L2}}^2 )\\
    &\stackrel{(*)}{\sim} \mathcal{N}(0,  \sigma^2\norm{\Logm C}_{F}^2 ).
\end{align*}
where we used Eq.~\ref{Eq:iso-useful} in $(*)$ and Eq.~\ref{eq:2} in $(**)$. ~\\

(c) Distribution of  $\norm{\Logm X}_{F}^2$. 
\begin{align*}
    \norm{\Logm X}_{F}^2 \stackrel{(*)}{=} \norm{\Vecd [\Logm X]}_{\text{L2}}^2 \stackrel{(**)}{\sim}  \sigma^2 \chi^2_{\frac{k(k+1)}{2}}.
\end{align*}
where we used Eq.~\ref{Eq:iso-useful} in $(*)$ and Eq.~\ref{eq:3} in $(**)$ with $p = \frac{k(k+1)}{2}.$
\end{proof}

As corollary, we give equivalent reformulation of tangent Gaussian mechanism that will useful in the rest of the proofs.

\begin{corollary}[Equivalent Reformulation of tangent Gaussian]
    \label{cor:eq}
    Let $\textbf{\textup{A}}_{\textup{TG}}$ be a tangent Gaussian mechanism defined as $\textbf{\textup{A}}_{\textup{TG}}(f(\mathcal{D})) =X,$ $X \sim \mathcal{LN}(f(\mathcal{D}), \sigma^2 I)$. Then, it is equivalently defined as:
    \begin{align*}
        \textbf{\textup{A}}_{\textup{TG}}(f(\mathcal{D})) = f(\mathcal{D}) \oplus N , N \sim \mathcal{LN}(I, \sigma^2 I).
    \end{align*}
\end{corollary}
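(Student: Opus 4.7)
The plan is to invoke Lemma~\ref{le:lnprops}, specifically Eq.~\eqref{Eq:LNP1}, essentially as a direct rewriting. First I would record a trivial but necessary observation: the operation $\oplus$ is commutative, since by its definition
\begin{align*}
X_1 \oplus X_2 = \Expm[\Logm X_1 + \Logm X_2] = \Expm[\Logm X_2 + \Logm X_1] = X_2 \oplus X_1,
\end{align*}
and in particular $f(\mathcal{D}) \oplus N = N \oplus f(\mathcal{D})$. This simple reorientation is what lets me line the expression up with the hypotheses of Eq.~\eqref{Eq:LNP1}.

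Next I would apply Eq.~\eqref{Eq:LNP1} with the substitutions $X \leftarrow N$, $M \leftarrow f(\mathcal{D})$, and $\Sigma = \sigma^2 I$. Since by hypothesis $N \sim \mathcal{LN}(I, \sigma^2 I)$, Eq.~\eqref{Eq:LNP1} delivers $N \oplus f(\mathcal{D}) \sim \mathcal{LN}(f(\mathcal{D}), \sigma^2 I)$. Combining this with the commutativity observation yields $f(\mathcal{D}) \oplus N \sim \mathcal{LN}(f(\mathcal{D}), \sigma^2 I)$, which is exactly the distribution $X \sim \mathcal{LN}(f(\mathcal{D}), \sigma^2 I)$ used in Definition~\ref{def:tan-gaussian}. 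Since both formulations produce output random variables with identical distributions, the two definitions of $\mathbf{A}_{\textup{TG}}$ coincide as randomized mechanisms.

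There is no real obstacle here; the corollary is a direct packaging of Eq.~\eqref{Eq:LNP1}. Its purpose is pragmatic: it rewrites the tangent Gaussian mechanism in a ``$\text{signal} \oplus \text{noise}$'' form analogous to the standard Euclidean Gaussian mechanism $f(\mathcal{D}) + N$ with $N \sim \mathcal{N}(0, \sigma^2 I)$, which is the most convenient shape for the subsequent privacy-loss calculation (Theorem~\ref{Th:dist}) where one needs to compare densities at arbitrary output points and exploit the vector-space structure $(\spd(k), \oplus, \odot)$ on the SPD manifold.
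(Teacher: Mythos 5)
Your proof is correct and takes essentially the same route as the paper, which simply cites Eq.~\eqref{Eq:LNP1} of Lemma~\ref{le:lnprops}; your extra remark on the commutativity of $\oplus$ is a harmless (and slightly more careful) bookkeeping step to align the order of the operands with the statement of that equation.
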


\begin{proof}
    The proof comes from Eq.~\ref{Eq:LNP1} of Lemma~\ref{le:lnprops}.
\end{proof}

Now, we are ready to prove the distribution of the privacy loss of the tangent Gaussian Mechanism which is given Th.~\ref{Th:dist}.

\subsection{Proof of  Th.~\ref{Th:dist}} \label{sec:distproof}

\begin{theorem}[Distribution of the privacy loss of the tangent Gaussian] 
Let $\mathcal{\mathbf{A}}_{\textup{TG}}$ be a tangent Gaussian mechanism with variance $\sigma^2$. Its privacy loss is normally distributed as
\begin{align*}
L_{\textbf{\textup{A}}_{\textup{TG}}, \mathcal{D}, \mathcal{D'}} \sim \mathcal{N} \left( \frac{\rho_{\textup{LE}}^2(f(\mathcal{D}), f(\mathcal{D}')) }{2\sigma^2}, \frac{\rho_{\textup{LE}}^2(f(\mathcal{D}), f(\mathcal{D}'))}{\sigma^2} \right).
\end{align*}
\end{theorem}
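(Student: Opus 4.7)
The plan is to reduce the computation to the Euclidean Gaussian case via the isometry $\Vecd \circ \Logm$ established earlier. Let $Y = \mathbf{A}_{\textup{TG}}(\mathcal{D})$ be the random output of the mechanism, and set $Z = \Vecd[\Logm Y]$, $u = \Vecd[\Logm f(\mathcal{D})]$, $u' = \Vecd[\Logm f(\mathcal{D}')]$. By Definition~\ref{def:log-normal}, $Z \sim \mathcal{N}(u, \sigma^2 I)$ under $\mathcal{D}$ and $Z \sim \mathcal{N}(u', \sigma^2 I)$ under $\mathcal{D}'$.

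The first key step is to observe that the Jacobian factor $J(X)$ in the log-Gaussian density depends only on the point $X$ and not on the mean parameter. Hence when we form the ratio
\begin{align*}
\frac{p_{\mathbf{A}_{\textup{TG}}(\mathcal{D})}(y)}{p_{\mathbf{A}_{\textup{TG}}(\mathcal{D}')}(y)},
\end{align*}
the $J(y)$ factors cancel, leaving exactly the ratio of two Euclidean Gaussian densities evaluated at $\Vecd[\Logm y]$. Therefore the privacy loss random variable equals
\begin{align*}
L_{\mathbf{A}_{\textup{TG}}, \mathcal{D}, \mathcal{D}'} = \tfrac{1}{2\sigma^{2}}\bigl(\norm{Z - u'}_{\textup{L2}}^{2} - \norm{Z - u}_{\textup{L2}}^{2}\bigr),
\end{align*}
which after expanding the squares simplifies to an affine function of $Z$, namely $\tfrac{1}{\sigma^{2}}\inp{Z}{u - u'}_{\textup{L2}} + \tfrac{1}{2\sigma^{2}}(\norm{u'}_{\textup{L2}}^{2} - \norm{u}_{\textup{L2}}^{2})$.

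Next, since $Z \sim \mathcal{N}(u, \sigma^{2} I)$ under $\mathcal{D}$, the scalar $\inp{Z}{u-u'}_{\textup{L2}}$ is a normal random variable with mean $\inp{u}{u-u'}_{\textup{L2}}$ and variance $\sigma^{2} \norm{u-u'}_{\textup{L2}}^{2}$. A short calculation then yields
\begin{align*}
\mathbb{E}[L_{\mathbf{A}_{\textup{TG}}, \mathcal{D}, \mathcal{D}'}] = \frac{\norm{u-u'}_{\textup{L2}}^{2}}{2\sigma^{2}}, \qquad \Var[L_{\mathbf{A}_{\textup{TG}}, \mathcal{D}, \mathcal{D}'}] = \frac{\norm{u-u'}_{\textup{L2}}^{2}}{\sigma^{2}}.
\end{align*}

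Finally, I invoke the isometry lemma (Eq.~\eqref{Eq:iso}) to replace $\norm{u-u'}_{\textup{L2}}^{2}$ by $\rho_{\textup{LE}}^{2}(f(\mathcal{D}), f(\mathcal{D}'))$, which gives exactly the claimed mean and variance and completes the proof. The only mildly delicate step is the Jacobian cancellation, which is what makes the log-Euclidean construction work so cleanly here; once that is noted, the remainder is the standard Euclidean Gaussian privacy-loss calculation pulled back through the isometry.
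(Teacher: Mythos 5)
Your proof is correct and follows essentially the same route as the paper's: both cancel the Jacobian factor in the density ratio and reduce the computation to the standard Euclidean Gaussian privacy-loss calculation via the isometry $\Vecd \circ \Logm$. The only difference is organizational --- you vectorize to $\mathbb{R}^{k(k+1)/2}$ at the outset and expand the squares there, whereas the paper carries out the same algebra using the $\oplus$, $\ominus$ vector-space operations on $\spd(k)$ and its Lemma on inner products of log-Gaussian variables --- and your explicit remark that $J(y)$ cancels is a point the paper leaves implicit.
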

\begin{proof}

Assume that $\mathcal{D}, \mathcal{D}'$ are adjacent datasets. Let $V = f(\mathcal{D})  \ominus  f(\mathcal{D'})$. Consider the privacy loss random variable $L_{\mathcal{\textbf{A}_{\textup{TG}}}, \mathcal{D}, \mathcal{D'}}$.  Let  $Y = \mathcal{\textbf{A}}_{\textup{TG}}(\mathcal{D})$.
\begin{align*}
    &\ln \left( \frac{p_{\mathcal{\textbf{A}}_{\textup{TG}}(\mathcal{D})}(Y)}{p_{\mathcal{\textbf{A}}_{\textup{TG}}(\mathcal{D'})}(Y)} \right) \\
    &\stackrel{(1)}{=} \ln  \left( \frac{p_{\mathcal{\textbf{A}}_{\textup{TG}}(\mathcal{D})}(f(\mathcal{D}) \oplus N)}{p_{\mathcal{\textbf{A}}_{\textup{TG}}(\mathcal{D'})}(f(\mathcal{D}) \oplus N)} \right) \\
    &\stackrel{(2)}{=} -\frac{1}{2}  \left[\Vecd \Big(\Logm (f(\mathcal{D}) \oplus N) - \Logm f(D)\Big)\right]^{T} \frac{I}{\sigma^2}  \Vecd\Big(\Logm (f(\mathcal{D}) \oplus N) - \Logm f(D)\Big) \\
    &\quad+ \frac{1}{2} \left[\Vecd \Big(\Logm (f(\mathcal{D}) \oplus N) - \Logm f(D')\Big)\right]^{T} \frac{I}{\sigma^2} \Vecd\Big(\Logm (f(\mathcal{D}) \oplus N) - \Logm f(D')\Big) \\
    &\stackrel{(3)}{=} -\frac{1}{2\sigma^2} \norm{\Vecd \Big(\Logm N\Big)}_{\text{L2}}^2 +  \frac{1}{2\sigma^2} \norm{ \Vecd \Big(\Logm f(D)- \Logm f(D') + \Logm N \Big)}^2_{\text{L2}}  \\
    &\stackrel{(4)}{=} -\frac{1}{2\sigma^2} \norm{\Vecd \Big(\Logm N\Big)}_{\text{L2}}^2 +  \frac{1}{2\sigma^2} \norm{ \Vecd \Big(\Logm( V \oplus N) \Big)}^2_{\text{L2}}  \\
    &\stackrel{(5)}{=} \frac{1}{2\sigma^2} \left[ \norm{\Logm(V \oplus N)}_{F}^2 - \norm{\Logm N}_{F}^2 \right] \\
    &= \frac{1}{2\sigma^2} \left[ \norm{\Logm V}_{F}^2 + 2 \inp{\Logm V}{\Logm N}_{F}  \right] \\
    &\stackrel{(6)}{\sim} \frac{1}{2\sigma^2} \left[ \norm{\Logm V}_{F}^2 + 2 \mathcal{N}\Big(0, \sigma^2\norm{\Logm V}_{F}^2\Big) \right] \\
    &\stackrel{(7)}{\sim} \mathcal{N} \left(\frac{\norm{\Logm V}_{F}^2}{2\sigma^2}, \frac{\norm{\Logm V}_{F}^2}{\sigma^2}   \right) \\
    &\stackrel{(8)}{\sim} \mathcal{N} \left(\frac{\rho^2_{\textup{LE}}(f(\mathcal{D}), f(\mathcal{D}')) }{2\sigma^2}, \frac{\rho^2_{\textup{LE}}(f(\mathcal{D}), f(\mathcal{D}'))}{\sigma^2}   \right),
\end{align*}

where we used following properties in each of the steps labeled above.

\begin{enumerate}
    \item Equivalent reformulation of tangent Gaussian, Corollary.~\ref{cor:eq}. 
    \item Density of Log Gaussian Distribution. 
    \item $f(\mathcal{D}) \oplus N = \Expm[ \Logm f(\mathcal{D}) + \Logm  N ]$.
    \item  $\Logm (V \oplus N) = \Logm f(D)- \Logm f(D') + \Logm N$.
    \item Isometry of the $\Vecd$ operation, Eq.\ref{Eq:iso-useful}
    \item Eq.~\ref{Eq:LNP2} in  Lemma.~\ref{le:lnprops}.
    \item standard Gaussian property, see Eq.~\ref{eq:1}.
    \item $\norm{\Logm V}^2_{F} = \norm{\Logm f(\mathcal{D}) - \Logm f(\mathcal{D}')}_{F}^2 = \rho^2_{\textup{LE}}(f(\mathcal{D}), f(\mathcal{D}'))$.
\end{enumerate}

\end{proof}

\subsection{Proof of Th.~\ref{th:TGPrivacy}} \label{sec:TGPrivacyPRoof}
In this section we give  proof of privacy guarantee of the tangent Gaussian Mechanism.

\begin{proof} The proof proceeds similarly to the proofs referenced below, by only replacing the standard sensitivity $\Delta_{\textup{L2}}$ with respect to the Euclidean $L_2$ metric, by $\Delta_{\textup{LE}}$:

\begin{enumerate}
    \item (Classical). See Th.~A.1 (Appendix A Page 261)  in \cite{dwork2014algorithmic}.

    \item (Analytic). See  Th.~5, Th.~8, Th.~9 (Section 3) in \cite{balle2018improving}.

\end{enumerate}

The fact that mechanism is manifold valued comes into play while deriving privacy loss (Taken care by Theorem 1). Once privacy loss (which is \emph{real valued scalar} random variable) is derived,  going from privacy loss to actual privacy guarantee wouldn't be affected whether a mechanism is manifold-valued or not because both of the above proofs \emph{entirely} rely on properties of one-dimensional euclidean Gaussian random variables. Specifically,

\begin{enumerate}
    \item (Classical). Directly employs  tail bound of one-dimensional Gaussian variable that $\mathbb{P}[x > t] < \frac{\sigma}{\pi} \exp(- \frac{t^2}{2 \sigma^2}) $
    
    \item (Analytic). The method employs both the sufficient and necessary conditions of the $(\epsilon, \delta)$ guarantee. Additionally, the algorithm avoids using tail bounds, since they may be loose, instead uses properties of Gaussian CDFs and employs binary search to solve analytically for $\sigma$, given $(\epsilon, \delta)$. See \cite[Algorithm 1]{balle2018improving} and discussion therein for more details.
    
\end{enumerate}
\end{proof}
\subsection{Proof of Th.~\ref{th:sensitivity} and Th.~\ref{th:utility}} \label{sec:sens_and_utility}

In this section, we prove the sensitivity of the Fréchet Mean in Theorem.~\ref{th:sensitivity} and then the utility of the tangent Gaussian Mechanism in Theorem.~\ref{th:utility}.  First we give the proof of \ref{th:sensitivity}. 

\begin{proof} Consider $k\in \mathbb{N}$, $0< r < \infty$ and $M \in \textup{SPD}(k)$ such that $\mathcal{B}_{r}(M)$ is a geodesic ball of radius $r$ and center $M$. Let $\mathcal{D} \sim \mathcal{D}'$ be adjacent datasets of size $n \in \mathbb{N}$ that lie in $\mathcal{B}_{r}(M)$. Without loss of generality, we can assume that they differ only by their last data point $X_n$ and $X_n'$: $\mathcal{D}= \{X_1, X_2, \dots, X_n\} $ and $\mathcal{D}' = \{X_1, X_2, \dots, X_{n}' \}$.  Let $\overline{X}_{\mathcal{D}}, \overline{X}_{\mathcal{D'}}$ denote the Fréchet means of $\mathcal{D}$ and $\mathcal{D}'$ for the log-Euclidean metric, which can be expressed in closed forms as mentioned in the main text. The log-Euclidean distance between the Fréchet means writes:
    
    \begin{align*}
        &\rho_{\textup{LE}} (\overline{X}_{\mathcal{D}}, \overline{X}_{\mathcal{D'}})\\
        &\stackrel{(*)}{=} \norm{\Logm \left( \Expm {\left(\sum_{i=1}^{n} \frac{\Logm X_i }{n} \right)} \right) -\Logm\left(\Expm{\left( \sum_{i=1}^{n-1} \frac{\Logm X_i}{n} +  \frac{\Logm X_n'}{n} \right)}\right)}_{F} \\ 
        &= \norm{\frac{1}{n}\sum_{i=1}^{n-1} \Logm X_i - \frac{1}{n}\sum_{i=1}^{n-1} \Logm X_i + \frac{1}{n} \Logm X_n - \frac{1}{n} \Logm X_n'  }_{F} \\ 
        &= \frac{1}{n} \norm{\Logm X_n - \Logm X_n' }_{F} \\
        &= \frac{1}{n} \rho_{\textup{LE}}(X_n, X_n'). \\
    \end{align*}
    \begin{align*}
        \Delta_{\textup{LE}} &= \sup_{\mathcal{D} \sim \mathcal{D}'} \rho_{\textup{LE}} (\overline{X}_{\mathcal{D}}, \overline{X}_{\mathcal{D'}}) = \sup_{\mathcal{D} \sim \mathcal{D}'} \frac{1}{n} \rho_{\textup{LE}}(X_n, X_n') \stackrel{(\dagger)}{\leq} \frac{1}{n}\left[ \rho_{\textup{LE}}(X_n, M) + \rho_{\textup{LE}}(M, X_n') \right] \stackrel{(\ddagger)}{\leq} \frac{2r}{n},
    \end{align*}
    where we use the closed form for the log-Euclidean Fréchet means in $(*)$, the triangle inequality in $(\dagger)$ and assumption that data lies in $\mathcal{B}_{r}(M)$ in $(\ddagger)$.
\end{proof}


    

Proof of  Th.~\ref{th:utility} is given as follows,

\begin{proof}
Consider deviation $\rho_{\textup{LE}}^2(f(\mathcal{D}), \mathcal{\textbf{\textup{A}}_{\textup{TG}}}(\mathcal{D})))$
\begin{align*}
    \rho_{\textup{LE}}^2(f(\mathcal{D}), \mathcal{\textbf{\textup{A}}_{\textup{TG}}}(\mathcal{D}))) &= \norm{\Logm f(\mathcal{D})- \Logm\mathcal{\textbf{\textup{A}}_{\textup{TG}}}(\mathcal{D}) }_{F}^2 \stackrel{(1)}{=} \norm{\Logm f(\mathcal{D})- \Logm (f(\mathcal{D}) \oplus N )}_{F}^2 \\
    &\stackrel{(2)}{=} \norm{\Logm N}_{F}^2  \\
    &\stackrel{(3)}{\sim} \sigma^2\chi^{2}_{d},
\end{align*}

where we use the following properties at each step:
\begin{enumerate}
    \item[(1)] Corollary.~\ref{cor:eq}.
    \item[(2)] $f(\mathcal{D}) \oplus N = \Expm \left[ \Logm f(\mathcal{D}) + \Logm N \right]$.
    \item[(3)] Eq.~\ref{Eq:LNP3} of Lemma.~\ref{le:lnprops}.
\end{enumerate}

Now we derive expression for $\mathbb{E}[\rho_{\textup{LE}}^2(f(\mathcal{D}),  \mathcal{\textbf{\textup{A}}_{\textup{TG}}}(\mathcal{D}))]$

\begin{align*}
    \mathbb{E}[\rho_{\textup{LE}}^2(f(\mathcal{D}),  \mathcal{\textbf{\textup{A}}_{\textup{TG}}}(\mathcal{D}))] &\stackrel{(1)}{=}  \sigma^2 d  \\
    &\stackrel{(2)}{=} \frac{2\Delta_{\textup{LE}}^2 \ln (1.25/\delta) d}{\epsilon^2} \\
    &\stackrel{(3)}{\leq} \frac{8r^2 \ln (1.25/\delta)d }{n^2\epsilon^2}.
\end{align*}

where we use following properties at each step:

\begin{enumerate}
    \item $c \sim \chi^{2}_{d} \implies \mathbb{E}[c] = d$ i.e., expectation of chi squared distributed random variable is number of degrees of freedom. 
    \item $\sigma = \Delta_{\textup{LE}} \sqrt{2 \ln (1.25/\delta) }/\epsilon $ for $(\epsilon, \delta)$-$\textbf{\textup{A}}_{\textup{TG}}$ from Th.~\ref{th:TGPrivacy}.
    \item $\Delta_{\textup{LE}} \leq \frac{2r}{n}$ from Th.~\ref{th:sensitivity}.
\end{enumerate}

\end{proof}
\subsection{Proof of Theorem \ref{testing}} \label{sec:GeodesicProof}

In this section we derive log-Euclidean geodesic radius of covariance descriptors.  We first prove following lemma that relates $||\Logm X||_{F}$ in terms of lower bound on least eigenvalue and upper bound on largest eigenvalue of $X$.  
\begin{lemma}\label{lemma:log-max-min}
If $X \in \textup{SPD}(k)$ and let  $\lambda_{\textup{min}}(X), \lambda_{\textup{max}}(X)$ be the minimum and maximum eigenvalues of $X$. If $\ell \leq \lambda_{\textup{min}}(X) $ and $\lambda_{\textup{max}}(X) \leq L$ Then, $\norm{\Logm X}_{F} \leq \sqrt{k} \max \left\{ \abs{ \ln \ell}, \abs{\ln L}  \right\}$.
\end{lemma}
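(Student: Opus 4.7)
The plan is to reduce the matrix inequality to a scalar bound via the spectral theorem. Since $X \in \textup{SPD}(k)$, I would write the eigendecomposition $X = U\Lambda U^T$ with $U$ orthogonal and $\Lambda = \textup{diag}(\lambda_1, \ldots, \lambda_k)$ where each $\lambda_i > 0$. Then $\Logm X = U \, (\Logm \Lambda) \, U^T$, with $\Logm \Lambda = \textup{diag}(\ln \lambda_1, \ldots, \ln \lambda_k)$.

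Next, I would invoke the unitary invariance of the Frobenius norm to obtain
\begin{align*}
\norm{\Logm X}_F^2 = \norm{U(\Logm \Lambda) U^T}_F^2 = \norm{\Logm \Lambda}_F^2 = \sum_{i=1}^k (\ln \lambda_i)^2.
\end{align*}
The task thus reduces to uniformly bounding each $|\ln \lambda_i|$.

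The key scalar observation is that since $\ell \leq \lambda_i \leq L$ for every $i$, monotonicity of the logarithm gives $\ln \ell \leq \ln \lambda_i \leq \ln L$. On any interval $[a,b] \subset \mathbb{R}$, the function $|t|$ attains its maximum at one of the endpoints, so $|\ln \lambda_i| \leq \max\{|\ln \ell|, |\ln L|\}$. Substituting into the sum yields
\begin{align*}
\norm{\Logm X}_F^2 \leq k \cdot \max\{|\ln \ell|, |\ln L|\}^2,
\end{align*}
and taking square roots delivers the claimed bound.

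There is no real obstacle here; the statement is essentially a direct consequence of the spectral theorem combined with a one-line monotonicity argument for the scalar logarithm. The only minor subtlety is recognizing that the bound on $|\ln \lambda_i|$ comes from the endpoints of the interval $[\ell, L]$ rather than from a single side, which requires the $\max$ of the two absolute values (accounting for the case when $\ell < 1 < L$ so that $\ln \ell$ and $\ln L$ have opposite signs).
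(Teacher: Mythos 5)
Your proof is correct, and it reaches the bound by a slightly different and arguably cleaner route than the paper. The paper first passes through the spectral norm via the inequality $\norm{A}_{F} \leq \sqrt{k}\,\norm{A}_{2}$, identifies $\norm{\Logm X}_{2} = \max_i \abs{\ln \lambda_i} = \max\{\abs{\ln \lambda_{\min}}, \abs{\ln \lambda_{\max}}\}$, and then establishes $\max\{\abs{\ln \lambda_{\min}}, \abs{\ln \lambda_{\max}}\} \leq \max\{\abs{\ln \ell}, \abs{\ln L}\}$ by an explicit three-way case analysis on whether $\lambda_{\min}$ and $\lambda_{\max}$ lie above or below $1$. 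You instead compute $\norm{\Logm X}_{F}^2 = \sum_{i}(\ln \lambda_i)^2$ directly from orthogonal invariance and dispatch the endpoint bound in one line, observing that $\abs{t}$ on the interval $[\ln \ell, \ln L]$ is maximized at an endpoint. The two arguments are mathematically equivalent in content (both reduce to bounding each $\abs{\ln \lambda_i}$ by $\max\{\abs{\ln \ell}, \abs{\ln L}\}$), but your endpoint observation replaces the paper's case analysis entirely, which is a genuine simplification; the paper's version, by isolating the intermediate quantity $\max\{\abs{\ln \lambda_{\min}}, \abs{\ln \lambda_{\max}}\}$, makes slightly more explicit where the bound can be tight, which is relevant to the tightness discussion in the experiments.
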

\begin{proof} Consider, 
\begin{align*}
    \norm{\Logm X}_{F} 
    &\stackrel{(\dagger)}{\leq}  \sqrt{k} \norm{\Logm X}_{2} \\
    &= \sqrt{k} \max_{i=1}^{n} \abs{\ln \lambda_i}\\
    &= \sqrt{k} \max \left\{ \abs{ \min_{i=1}^{n}\ln \lambda_i}, \abs{\max_{i=1}^{n}\ln \lambda_i }  \right\} \\
    & \stackrel{(\ddagger)}{=} \sqrt{k} \max \left\{ \abs{ \ln \min_{i=1}^{n} \lambda_i},  \abs{ \ln  \max_{i=1}^{n} \lambda_i }  \right\} \\
    &= \sqrt{k} \max \left\{ \abs{ \ln \lambda_{\textup{min}}}, \abs{\ln \lambda_{\textup{max}} }  \right\}. \numberthis \label{Eq.Last}
\end{align*}

where $(\dagger)$ uses the fact that $A \in \mathbb{R}^{k \times k}, \norm{A}_{F} \leq \sqrt{k}\norm{A}_{2}$ and $(\ddagger)$ uses the fact that $\ln$ is monotonically increasing. Now, we split the derivation into two cases.

\begin{enumerate} 
    \item \textsc{Case } $\lambda_{\text{min}}(X) \geq 1$. For $x \geq 1$ $, |\ln x|$ is an increasing function, which gives us: $\abs{\ln \ell} \leq \abs{ \ln \lambda_{\textup{min}}(X)} \leq  \abs{\ln \lambda_{\textup{max}}} \leq \abs{ \ln  L} $
    \begin{align}\label{Eq:case1}
        \sqrt{k} \max \left\{ \abs{ \ln \lambda_{\textup{min}}}, \abs{\ln \lambda_{\textup{max}} }  \right\} &\leq \sqrt{k} \abs{ \ln  L} = \sqrt{k} \max \left\{ \abs{ \ln \ell}, \abs{\ln L}  \right\}.
    \end{align}
    \item \textsc{Case } $\lambda_{\text{min}}(X) < 1$. For $x < 1$, $|\ln x|$ is a decreasing function: $\abs{\ln \lambda_{\textup{min}} } \leq \abs{\ln \ell}.$ 
    We further split the derivation into two sub-cases here
    
    \begin{enumerate}
        \item \textsc{Sub-case $\lambda_{\textup{max}}  \geq 1$.} In this sub-case $\abs{\ln \lambda_{\textup{max}} } \leq  \abs{ \ln  L} $ and $\ln \lambda_{\textup{min}}  \leq \abs{\ln \ell}$ from which we have that
        \begin{align} \label{Eq:case2}
        \sqrt{k} \max \left\{ \abs{ \ln \lambda_{\textup{min}}}, \abs{\ln \lambda_{\textup{max}} }  \right\} &\leq \sqrt{k} \max \left\{ \abs{ \ln \ell}, \abs{\ln L}  \right\}.
    \end{align}
        
        \item \textsc{Sub-case $\lambda_{\textup{max}}  < 1$.} In this sub-case $ \abs{\ln L} \leq \abs{\ln \lambda_{\max}} \leq \abs{\ln \lambda_{\textup{min}} } \leq \abs{\ln \ell}.$
        \begin{align} \label{Eq:case3}
        \sqrt{k} \max \left\{ \abs{ \ln \lambda_{\textup{min}}}, \abs{\ln \lambda_{\textup{max}} }  \right\} &\leq \sqrt{k} \abs{\ln \ell } = \sqrt{k} \max \left\{ \abs{ \ln \ell}, \abs{\ln L}  \right\}.
    \end{align}
    \end{enumerate}

    Based on Eq.~\ref{Eq:case1}, Eq.~\ref{Eq:case2}, Eq.~\ref{Eq:case3} and Eq.\ref{Eq.Last}. We can conclude the lemma.

\end{enumerate}

\end{proof}

\begin{lemma}\label{lemma:l2norm}
~
Let $\textsf{R}_{\eta}(\mathcal{I})$ denote the covariance descriptor for image $\mathcal{I}$ for given $\eta > 0$, which is  defined as follows , 
\begin{align*}
    \textsf{R}_{\eta}(\mathcal{I}) &= \left[ \frac{1}{\abs{\mathcal{S}}} \sum_{\textbf{x} \in S} (\phi(\mathcal{I})(\textbf{x}) - \mu)(\phi({\mathcal{I}})(\textbf{x}) - \mu)^{T} \right] + \eta . I,
\end{align*}
with, 
\begin{align*}
    \phi(\mathcal{I}) &= \left[x, y, \mathcal{I}, \abs{\mathcal{I}_x}, \abs{\mathcal{I}_y}, \abs{\mathcal{I}_{xx}}, \abs{\mathcal{I}_{yy}}, \sqrt{\abs{\mathcal{I}_x}^2 + \abs{\mathcal{I}_y}^2 }, \arctan \left( \frac{\abs{\mathcal{I}_x}}{\abs{\mathcal{I}_y}} \right)\right].
\end{align*}

where $x,y$ are grid positions of Image $\mathcal{I}$, $\mathcal{I}$ denote pixel intensity values , $\abs{\mathcal{I}_{x}}, \abs{\mathcal{I}_{y}}$ denotes first order intensity derivatives and $\abs{\mathcal{I}_{xx}}, \abs{\mathcal{I}_{yy}}$ denotes the second order intensity derivatives  then following holds, 

\begin{enumerate}
    \item If $\mathcal{I}$ is grayscale image, then $\norm{\textsf{R}_{\eta}(\mathcal{I})}_{2} \leq 12 + \eta $.
    \item If $\mathcal{I}$ is RGB image then $\norm{\textsf{R}_{\eta}(\mathcal{I})}_{2} \leq 14 + \eta$.
\end{enumerate}
\end{lemma}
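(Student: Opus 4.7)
The plan is to decompose $\textsf{R}_{\eta}(\mathcal{I}) = C + \eta I$, where $C = \frac{1}{|S|}\sum_{\mathbf{x} \in S}(\phi(\mathcal{I})(\mathbf{x}) - \mu)(\phi(\mathcal{I})(\mathbf{x}) - \mu)^{T}$ is the sample covariance of the feature vectors, and then bound $\|C\|_{2}$ by a uniform coordinate-wise range argument; the $\eta$ is absorbed via the triangle inequality for the spectral norm. I will tacitly assume the standard preprocessing convention for covariance descriptors, namely that normalized spatial coordinates, raw intensities, and intensity derivatives are scaled into $[0,1]$, since this is required for the numerical constants $12$ and $14$ to come out exactly.

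First I would apply sub-additivity of the spectral norm together with the identity $\|vv^{T}\|_{2} = \|v\|_{\textup{L2}}^{2}$ to reduce to a pointwise bound:
$$\|\textsf{R}_{\eta}(\mathcal{I})\|_{2} \;\leq\; \|C\|_{2} + \eta \;\leq\; \frac{1}{|S|}\sum_{\mathbf{x} \in S}\|\phi(\mathcal{I})(\mathbf{x}) - \mu\|_{\textup{L2}}^{2} + \eta.$$
This reduces the problem to bounding $\sum_{i}(\phi_{i}(\mathbf{x}) - \mu_{i})^{2}$ uniformly in $\mathbf{x}$.

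Next I would use the elementary fact that if $\phi_{i}(\mathbf{x}) \in [a_{i}, b_{i}]$ for all $\mathbf{x}$, then $\mu_{i} \in [a_{i}, b_{i}]$ as well, so $(\phi_{i}(\mathbf{x}) - \mu_{i})^{2} \leq (b_{i} - a_{i})^{2}$. I would then enumerate the feature ranges: the two spatial coordinates $x,y$ and the intensity values (one coordinate for grayscale, three for RGB) all lie in $[0,1]$ and each contributes at most $1$ to the sum of squared ranges; the four derivative features $|\mathcal{I}_{x}|, |\mathcal{I}_{y}|, |\mathcal{I}_{xx}|, |\mathcal{I}_{yy}|$ lie in $[0,1]$ and contribute $4$ in total; the gradient magnitude $\sqrt{|\mathcal{I}_{x}|^{2} + |\mathcal{I}_{y}|^{2}}$ lies in $[0, \sqrt{2}]$ and contributes $(\sqrt{2})^{2} = 2$; and the orientation $\arctan(|\mathcal{I}_{x}|/|\mathcal{I}_{y}|)$ lies in $[0, \pi/2]$ and contributes at most $(\pi/2)^{2} \leq 3$. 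Summing gives $2 + 1 + 4 + 2 + 3 = 12$ in the grayscale case and $2 + 3 + 4 + 2 + 3 = 14$ in the RGB case, which combined with the triangle inequality above yields both claims.

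The hard part will not be mathematical but bookkeeping: correctly identifying the ranges of each feature (in particular writing the crude bound $(\pi/2)^{2} \leq 3$ for the arctan coordinate and $2$ for the gradient magnitude) and making the normalization convention on raw image quantities explicit, since it is implicit in the statement but essential for the constants to come out as claimed. The underlying linear algebra is limited to sub-additivity of $\|\cdot\|_{2}$ and the rank-one identity $\|vv^{T}\|_{2} = \|v\|_{\textup{L2}}^{2}$.
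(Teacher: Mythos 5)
Your proposal is correct and follows essentially the same route as the paper's proof: sub-additivity of the spectral norm to peel off $\eta I$, the rank-one identity $\norm{vv^{T}}_{2}=\norm{v}_{\textup{L2}}^{2}$, and a coordinate-wise enumeration of the feature ranges (spatial coordinates and intensities in $[0,1]$, derivatives in $[0,1]$, gradient magnitude at most $\sqrt{2}$, orientation at most $\pi/2$ with $(\pi/2)^{2}\leq 3$) yielding exactly $12$ and $14$. The only differences are that the paper derives the bounds $\abs{\mathcal{I}_x},\abs{\mathcal{I}_y},\abs{\mathcal{I}_{xx}},\abs{\mathcal{I}_{yy}}\leq 1$ explicitly from the normalized Sobel-type convolution kernels rather than assuming them as a preprocessing convention, and it handles the centering by $\mu$ via nonnegativity of all feature coordinates (your interval-range argument $(\phi_i-\mu_i)^2\leq(b_i-a_i)^2$ is, if anything, the cleaner justification of that step).
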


\begin{proof}

We have:

\begin{align*}
    \norm{\textsf{R}_{\eta}(\mathcal{I}) }_{2}&= \norm{\left[ \frac{1}{\abs{\mathcal{S}}} \sum_{\textbf{x} \in S} (\phi(\mathcal{I})(\textbf{x}) - \mu)(\phi({\mathcal{I}})(\textbf{x}) - \mu)^{T} \right] + \eta . I}_{2} \\
    &\stackrel{(1)}{\leq}   \frac{1}{\abs{\mathcal{S}}} \sum_{\textbf{x} \in S} \norm{(\phi(\mathcal{I})(\textbf{x}) - \mu)(\phi({\mathcal{I}})(\textbf{x}) - \mu)^{T}}_{2}  + \norm{\eta . I}_{2} \\
    &\leq \max_{\textbf{x} \in \mathcal{S}} \norm{(\phi(\mathcal{I})(\textbf{x}) - \mu)(\phi({\mathcal{I}})(\textbf{x}) - \mu)^{T}}_{2} + \eta \\
    &\stackrel{(2)}{=} \max_{\textbf{x} \in \mathcal{S}} \norm{(\phi(\mathcal{I})(\textbf{x}) - \mu)}_{\text{L2}}^2 + \eta  \\
    &\stackrel{(3)}{\leq} \max_{\textbf{x} \in \mathcal{S}}  \norm{\phi(\mathcal{I})(\textbf{x})}_{\text{L2}}^2 + \eta \numberthis \label{Eq:spectral-last} ,
\end{align*}
where we used following properties in each of the steps:
\begin{enumerate}
    \item Triangle Inequality. 
    \item For all $a \in \mathbb{R}^{p}$, the spectral norm of 1-rank matrix $aa^{T}$ is $\norm{a}_{\text{L2}}^2$.
    \item Consider the descriptor $\phi(\mathcal{I}) = \left[x, y, \mathcal{I}, \abs{\mathcal{I}_x}, \abs{\mathcal{I}_y}, \abs{\mathcal{I}_{xx}}, \abs{\mathcal{I}_{yy}}, \sqrt{\abs{\mathcal{I}_x}^2 + \abs{\mathcal{I}_y}^2 }, \arctan \left( \frac{\abs{\mathcal{I}_x}}{\abs{\mathcal{I}_y}} \right)\right]$. Then, $\phi(\mathcal{I})(\textbf{x})_i \geq 0 $ for each $\textbf{x} \in \mathcal{S}$ and  $i \in \{1,\dots, k\}$. This yields: $(\mu)_{i} = \left(\abs{\mathcal{S}}^{-1} \sum_{\textbf{x} \in \mathcal{S}} \phi(\mathcal{I})(\textbf{x})\right)_{i} \geq 0 $. Hence it implies that $\norm{\phi(\mathcal{I})(\textbf{x}) - \mu}_{\text{L2}}^2 \leq  \norm{\phi(\mathcal{I})(\textbf{x})}_{\text{L2}}^2$. 
\end{enumerate}

Then, the following calculations provide an upper bound for $\norm{\phi(\mathcal{I})(\textbf{x})}_{2}^2$. Specifically, we bound each of the 6 elements constituting the descriptor $\phi(\mathcal{I}) = \left[x, y, \mathcal{I}, \abs{\mathcal{I}_x}, \abs{\mathcal{I}_y}, \abs{\mathcal{I}_{xx}}, \abs{\mathcal{I}_{yy}}, \sqrt{\abs{\mathcal{I}_x}^2 + \abs{\mathcal{I}_y}^2 }, \arctan \left( \frac{\abs{\mathcal{I}_x}}{\abs{\mathcal{I}_y}} \right)\right]$.

\begin{enumerate}
    \item Normalized grid positions :  $ \forall \textbf{x} \in \mathcal{S}, \,\, 0 \leq x,y \leq 1$.
    \item Pixel intensity values $C_{i}$ for $i \in [c]$ :   $ \forall \textbf{x} \in \mathcal{S},\,\, 0 \leq C_{i}[\textbf{x}] \leq 1$.
    \item First intensity derivatives $\abs{\mathcal{I}_{x}}, \abs{\mathcal{I}_{y}}$: The first intensity derivatives can be obtained by the convolution operation (denoted as $\star$):
    
    \begin{align*}
    \mathcal{I}_{x}  =  \mathcal{I} \star \frac{1}{4} \begin{bmatrix}
    +1  & 0 & -1 \\
    +2  & 0 & -2  \\
    +1  & 0 & -1 
\end{bmatrix}, \mathcal{I}_{y}  =  \mathcal{I} \star \frac{1}{4} \begin{bmatrix}
    +1  & +2 & +1 \\
    0   & 0 & 0  \\
    -1  & -2 & -1 
\end{bmatrix}.
    \end{align*}
    
    Since $0 \leq \mathcal{I}(\textbf{x}) \leq 1$, using the definition of the convolution operation yields  $\forall \textbf{x} \in \mathcal{S}$, $|\mathcal{I}_{x}(\textbf{x})| \leq 1$, $|\mathcal{I}_{y}(\textbf{x})| \leq 1$.
    \item Second intensity derivatives $\abs{\mathcal{I}_{xx}}, \abs{\mathcal{I}_{yy}}$ : The second intensity derivatives can be obtained by the convolution operation (denoted as $\star$)
    
         \begin{align*}
  \mathcal{I}_{xx}  =  \mathcal{I} \star \frac{1}{32} \begin{bmatrix}
    +1  & 0 & -2  & 0 & 1 \\
    +4  & 0 & -8  & 0 & 4 \\
    +6  & 0 & -12 & 0 & 6 \\
    +4  & 0 & -8  & 0 & 4 \\
    +1  & 0 & -2  & 0 & 1
\end{bmatrix} ,   \mathcal{I}_{yy}  =  \mathcal{I} \star \frac{1}{32} \begin{bmatrix}
    +1 & +4  & +6  & +4 & +1  \\
    0  &  0  & 0   & 0  & 0 \\
    -2 & -8  & -12 & -8 & -2 \\
    0  &  0  & 0   & 0  & 0 \\
    +1 & +4  & +6  & +4 & +1  \\
\end{bmatrix}.
    \end{align*}
     Since $0 \leq \mathcal{I}(\textbf{x}) \leq 1$, using the definition of the convolution operation yields $|\mathcal{I}_{xx}(\textbf{x})| \leq 1$, $|\mathcal{I}_{yy}(\textbf{x})| \leq 1$.
    \item Norm of first intensity derivatives : since $|\mathcal{I}_{x}(\textbf{x})| \leq 1$, $|\mathcal{I}_{y}(\textbf{x})| \leq 1$ we have that   $\forall \textbf{x} \in \mathcal{S}, \sqrt{\mathcal{I}_{x}(\textbf{x})^2 + \mathcal{I}_{y}(\textbf{x})^2 } \leq \sqrt{2}$.
    \item Angle of intensity derivatives : Note that for $a \geq 0$, $0 \leq \arctan a \leq \frac{\pi}{2}$. Hence we have that $ \forall \textbf{x} \in \mathcal{S}, \arctan \left( \abs{ \frac{\mathcal{I}_{x}(\textbf{x})}{\mathcal{I}_{y}(\textbf{x})}} \right) \leq \frac{\pi}{2}$.
\end{enumerate}

These provide the following upper bounds on $\textup{L2}$ norm of $\phi(\mathcal{I})(\textbf{x})$, 
\begin{align}
    \text{for a gray scale image}, \forall \textbf{x} \in \mathcal{S} \norm{\phi(\mathcal{I})(\textbf{x})}_{\text{L2}}^2 &\leq 12, \label{Eq:gray-spectral} \\
    \text{for RGB image}, \forall \textbf{x} \in \mathcal{S} \norm{\phi(\mathcal{I})(\textbf{x})}_{\text{L2}}^2 &\leq 14 \label{Eq:rgb-spectral}.
\end{align}

The claim follows by using Eq.~\ref{Eq:gray-spectral}, Eq.~\ref{Eq:rgb-spectral}  in Eq.~\ref{Eq:spectral-last}
\end{proof}

\begin{theorem}[Geodesic Radius of Covariance Descriptors]
~
\begin{enumerate}
    \item If $\mathcal{I}$ is a gray scale image, then $\norm{ \Logm \textsf{R}_{\eta}(\mathcal{I})}_{F} \leq \sqrt{9} \max \left\{ \abs{ \ln \eta}, \abs{\ln (12+\eta)}  \right\} $.
    \item If $\mathcal{I}$ is a RGB image, then $\norm{ \Logm  \textsf{R}_{\eta}(\mathcal{I})}_{F} \leq \sqrt{11}\max \left\{ \abs{ \ln \eta}, \abs{\ln (14+\eta)}  \right\} $.
\end{enumerate}
\end{theorem}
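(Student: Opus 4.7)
The plan is to reduce the theorem to Lemma~\ref{lemma:log-max-min} and Lemma~\ref{lemma:l2norm} by extracting a lower bound on the smallest eigenvalue and an upper bound on the largest eigenvalue of $\textsf{R}_{\eta}(\mathcal{I})$, and then invoking the first lemma with these two numbers playing the roles of $\ell$ and $L$.

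First I would write $\textsf{R}_{\eta}(\mathcal{I}) = S + \eta I$ with
\[
S \;=\; \frac{1}{|\mathcal{S}|}\sum_{\mathbf{x}\in \mathcal{S}} \bigl(\phi(\mathcal{I})(\mathbf{x}) - \mu\bigr)\bigl(\phi(\mathcal{I})(\mathbf{x}) - \mu\bigr)^{T}.
\]
Each summand of $S$ is a rank-one outer product $vv^{T}$ and hence positive semidefinite, so $S \succeq 0$ and consequently every eigenvalue of $\textsf{R}_{\eta}(\mathcal{I})$ is at least $\eta$. This supplies $\ell = \eta$ in the notation of Lemma~\ref{lemma:log-max-min}.

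Next, since $\textsf{R}_{\eta}(\mathcal{I})$ is symmetric its largest eigenvalue coincides with its spectral norm, and Lemma~\ref{lemma:l2norm} already controls that quantity: $\|\textsf{R}_{\eta}(\mathcal{I})\|_{2} \leq 12 + \eta$ in the grayscale case (where the descriptor lives in $\textup{SPD}(9)$, so $k = 9$) and $\|\textsf{R}_{\eta}(\mathcal{I})\|_{2} \leq 14 + \eta$ in the RGB case (where $k = 11$). These play the role of $L$ in Lemma~\ref{lemma:log-max-min}. Plugging $(\ell, L, k) = (\eta,\, 12+\eta,\, 9)$ and $(\ell, L, k) = (\eta,\, 14+\eta,\, 11)$ respectively into Lemma~\ref{lemma:log-max-min} yields the two claimed inequalities.

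The argument is essentially a packaging step, since the substantive work has already been absorbed into Lemma~\ref{lemma:l2norm} (bounding each coordinate of $\phi$, and hence $\|\phi(\mathcal{I})(\mathbf{x})\|_{\textup{L2}}^{2}$) and into Lemma~\ref{lemma:log-max-min} (converting extremal eigenvalue bounds into a Frobenius-norm bound on $\Logm$). The only care needed is to match the modality to the correct ambient dimension ($k = 9$ for the 9-dimensional grayscale feature, $k = 11$ for the 11-dimensional RGB feature), so I do not anticipate any real obstacle.
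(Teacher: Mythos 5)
Your proposal is correct and follows essentially the same route as the paper: the paper likewise establishes $\lambda_{\min}(\textsf{R}_{\eta}(\mathcal{I})) \geq \eta$ (via Weyl's inequality, which amounts to your observation that the covariance part is positive semidefinite) and then feeds $\ell = \eta$ together with the spectral-norm bounds from Lemma~\ref{lemma:l2norm} into Lemma~\ref{lemma:log-max-min} with $k=9$ or $k=11$. The only cosmetic difference is that you justify the minimum-eigenvalue bound directly from $S \succeq 0$ rather than citing Weyl's inequality, which is if anything slightly cleaner.
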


\begin{proof}

We first note that 
\begin{align*}
    \lambda_{\text{min}}(\textsf{R}_{\eta}(\mathcal{I})) &= \lambda_{\text{min}} \left[ \frac{1}{\abs{\mathcal{S}}} \sum_{\textbf{x} \in S} (\phi(\mathcal{I})(\textbf{x}) - \mu)(\phi({\mathcal{I}})(\textbf{x}) - \mu)^{T}  + \eta . I\right] \\
    & \stackrel{(1)}{\geq}  \lambda_{\text{min}} \left[ \frac{1}{\abs{\mathcal{S}}} \sum_{\textbf{x} \in S} (\phi(\mathcal{I})(\textbf{x}) - \mu)(\phi({\mathcal{I}})(\textbf{x}) - \mu)^{T}   \right] + \lambda_{\text{min}}[\eta I] \\
    &\stackrel{(2)}{\geq} 0 + \eta.  \numberthis \label{Eq:mineig}
\end{align*}
where we used Weyl's inequality for symmetric matrices in $(1)$ and $\lambda_{\textup{min}}$ of positive semi definite matrix is $\geq 0$ and $\lambda_{\textup{min}}[\eta I] = \eta $  in $(2)$.

For gray scale images, $\textsf{R}_{\eta}(\mathcal{I})$ produces a $9 \times 9$ matrix:

\begin{align*}
    \norm{\Logm \textsf{R}_{\eta}(\mathcal{I})}_{F} &\stackrel{(*)}{\leq} \sqrt{9}\max \left\{ \abs{ \ln \ell}, \abs{\ln L}  \right\} \\
    &\stackrel{(**)}{=} \sqrt{9} \max \left\{ \abs{ \ln \eta}, \abs{\ln (12+\eta)}  \right\},
\end{align*}

where we use Lemma.~\ref{lemma:log-max-min} in $(*)$ and Eq.\ref{Eq:mineig}($\ell = \eta$)  and  Lemma.~\ref{lemma:l2norm}($L=12 + \eta$) in $(**)$ 

For RGB images, $\textsf{R}_{\eta}(\mathcal{I})$ produces a $11 \times 11$ matrix:

\begin{align*}
    \norm{\Logm  \textsf{R}_{\eta}(\mathcal{I})}_{F} &\stackrel{(\dagger)}{\leq} \sqrt{11}\max \left\{ \abs{ \ln \ell}, \abs{\ln L}  \right\} \\
    &\stackrel{(\ddagger)}{=} \sqrt{11} \max \left\{ \abs{ \ln \eta}, \abs{\ln (14+\eta)}  \right\},
\end{align*}

where we use Lemma.~\ref{lemma:log-max-min} in $(\dagger)$ and Eq.\ref{Eq:mineig} ($\ell = \eta$)  and  Lemma.~\ref{lemma:l2norm}($L = 14 + \eta$) in $(\ddagger)$, in a similar fashion. 
\end{proof}

Note that in all of our experiments, we choose $\eta = 10^{-6}$ and hence  $|\ln \eta| \approx 13.8$ domninates over $|\ln (12+\eta)| \approx 2.5$ and $|\ln (14+\eta)| \approx 2.6$.



\section{Experiments}

All experiments were run on \textsc{Dell XPS 17 9710 Laptop} which has \textsc{32GB of RAM}, \textsc{11th Gen Intel(R) Core i9-11900H @ 2.50GHz} Processor. No GPUs were used in the experiments.

\subsection{Implementation Details}

Let $k\in \mathbb{N}$, $M \in \spd(k)$, $\sigma>0$ and $\rho_{\textup{LE}}$ denote log-Euclidean distance. The Riemannian Laplace distribution with log-Euclidean distance is given by 
\begin{align} \label{lapMech}
    p(X|M,\sigma) &= \frac{1}{\mathcal{C}_{M,\sigma}}\exp\left(- \frac{\rho_{\textup{LE}}(X,M)}{\sigma}\right).
\end{align}

Note that sampling from Eq.~\eqref{lapMech} requires Markov Chain Monte Carlo (MCMC) methods \citep{robert1999monte}, for which one needs to choose a proposal distribution that generates candidates on the SPD Manifold. We choose the Log Gaussian distribution as the proposal in our experiments given its simplicity and the fact that it is quick to sample from. In all experiments, we found that using the log Gaussian distribution as proposal yields a stable acceptance ratio of 50\% to 65\%. To summarize, 

\begin{enumerate}
    \item Initialize $X_{\text{curr}}$ at a random point of the manifold $\spd(k)$.
    \item For $1 \rightarrow n$ iterations 
    
    \begin{enumerate}
        \item Draw a candidate from $X \sim \mathcal{LN}(X_{\text{curr}}, \sigma^2I)$.
        \item With probability $\exp(-\rho_{\textup{LE}}(X_{\text{mean}}, X )/\sigma)/ \exp(-\rho_{\textup{LE}}(X_{\text{curr}}, X)/\sigma)$ accept the generated candidate $X$ and set $X_{\text{curr}} = X$ .
    \end{enumerate}
\end{enumerate}

The final sample is chosen based on a burn-in period of 50,000 steps.

\begin{figure}[!]
\vspace{-10mm}
    \centering
    \subfloat[\centering ]{{\includegraphics[width=.3\linewidth]{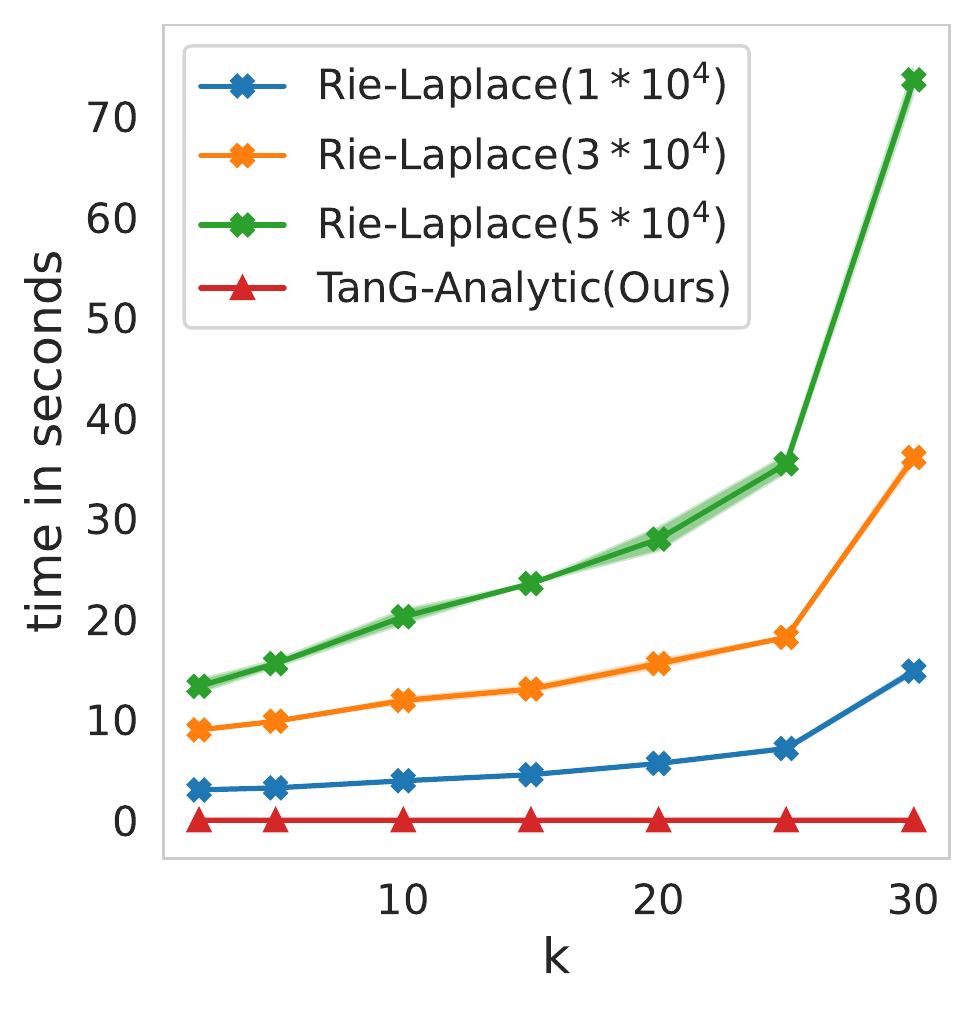} }}%
    \qquad
    \subfloat[\centering ]{{\includegraphics[width=.3\linewidth]{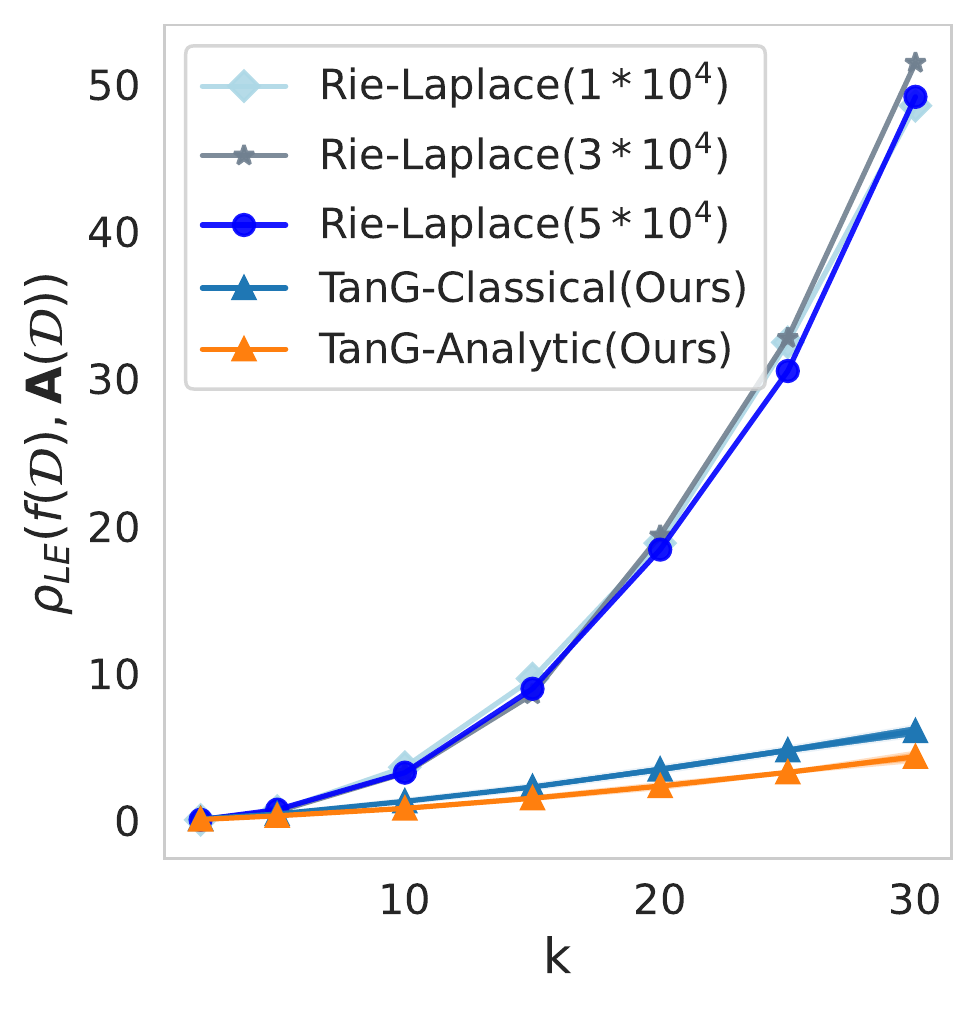} }}%
     \subfloat[\centering ]{{\includegraphics[width=.3\linewidth]{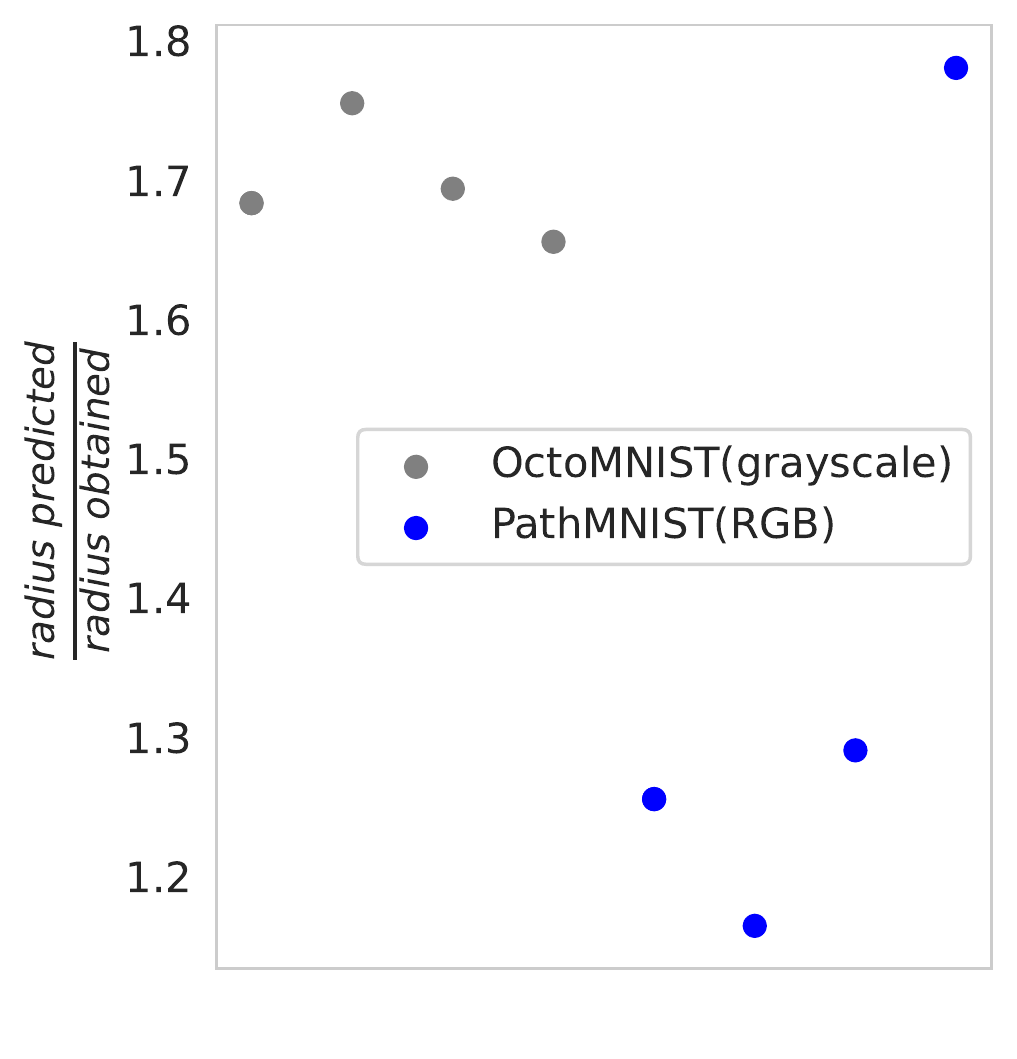} }}%
    \caption{(a) Computational times for \emph{Rie-Laplace}($x$) the Riemannian Laplace mechanism with a MCMC burn-in of $x \in \{10,000; 30,000; 50,000 \}$ \citep{reimherr2021differential}, and \emph{TanG-Analytic} the proposed tangent Gaussian mechanism (analytic version). (b) Utility with varying burn-ins for \emph{Rie-Laplace}. Plots (a, b) use different matrix sizes $k$. Plot (c) explores if the bound from Th.~ \ref{testing} is tight in practice.}%
    \label{fig:burn-in}%
    \vspace{-0.4cm}
\end{figure}

 \subsection{Additional Experiments}

 We compare the times required to privatize the Fréchet mean using both mechanisms and varying $k \in \{2, 5, 10, 15, 20, 25, 30 \}$ in Fig.~\ref{fig:burn-in}$(a)$. Note that we used MCMC for Riemannian Laplace and its time depends on the burn-in - that we choose in $\{10000 , 30000 , 50000 \}$. For $k=30$, Fig.~\ref{fig:burn-in}$(a)$ shows that Riemannian Laplace mechanism takes $14$ sec (burn-in $10000$), $36$ sec (burn-in $30000$) and $73$ sec (burn-in $50000$) -  whereas our tangent Gaussian (Analytic) mechanism takes $1.3$ \emph{microsec}. Fig.~\ref{fig:burn-in}$(b)$ considers the effect of the burn-in on the Riemannian Laplace's utility and finds no significant difference for burn-ins in $\{10000, 30000 , 50000\}$.

 Fig.~\ref{fig:burn-in} $(c)$ shows that the bound derived in Th.~\ref{testing} is tight in practice, as illustrated by the ratio of the bound obtained in Th.\ref{testing} and the practical bound.

\subsection{Code}

Code is attached with Supplementary Material. Both Riemannian Laplace and tangent Gaussian mechanism can be easily implemented using existing libraries like \texttt{geomstats} \cite{miolane2020geomstats}, \texttt{tensorflow-riemopt} \cite{smirnov2021tensorflow}, \texttt{rieoptax} \cite{utpala2022rieoptax}. In all our experiments we used \texttt{geomstats} \cite{miolane2020geomstats}.

\end{document}